\newtheorem{theorem}{Theorem}
\newtheorem{proposition}[theorem]{Proposition}
\theoremstyle{definition}
\newtheorem{definition}[theorem]{Definition}
\newtheorem{example}[theorem]{Example}
\newtheorem{remark}[theorem]{Remark}
\newcommand{\eg}{eg}
\newcommand{\ie}{ie}
\renewcommand{\epsilon}{\varepsilon}
\newcommand{\Forall}[1]{\forall #1 \ldotp}
\newcommand{\Exists}[1]{\exists #1 \ldotp}
\newcommand{\catname}[1]{\mathcal{#1}}
\newcommand{\catC}{\catname{C}}
\newcommand{\catE}{\catname{E}}
\newcommand{\catF}{\catname{F}}
\newcommand{\baseS}{\catname{S}}             
\newcommand{\Set}{\mathsf{Set}}   
\newcommand{\Sierp}{\mathbb{S}}              
\newcommand{\twopos}{\overrightarrow{2}}    
\newcommand{\Nat}{\mathbb{N}}       
\newcommand{\Qrat}{\mathbb{Q}}      
\newcommand{\Reals}{\mathbb{R}}              
\newcommand{\Spec}{\mathop{\mathsf{Spec}}}   
\newcommand{\Clop}{\mathsf{Clop}}   
\newcommand{\pt}[1]{\mathsf{pt}(#1)}                
\newcommand{\GRD}{\mathsf{GRD}}     
\newcommand{\site}{\mathsf{Site}}   
\newcommand{\power}{\mathcal{P}}            
\newcommand{\fin}{\mathcal{F}}              
\newcommand{\List}{\mathop{\mathsf{List}}}  
\newcommand{\cons}{\mathsf{cons}}
\newcommand{\Idl}{\mathsf{Idl}}  
\newcommand{\unmeet}{\mathop{\bigwedge}}   
\newcommand{\unjoin}{\mathop{\bigvee}}   
\newcommand{\covered}{\vartriangleleft}   
\newcommand{\turnstile}[1]{\mathrel{\mathop{\vdash}_{#1}}}
\newcommand{\Id}{\mathop{\mathsf{Id}}}
\newcommand{\DL}{\mathsf{DL}}               
\newcommand{\Loc}{\mathbf{Loc}}             
\newcommand{\GFr}{\mathfrak{GFr}}           
\newcommand{\BTop}{\mathfrak{BTop}}         
\newcommand{\thT}{\mathbb{T}}               
\newcommand{\thobj}{\mathbb{O}}             
\newcommand{\oftype}{\mathord{:}}
\newcommand{\inj}{\mathsf{inj}}
\newcommand{\obj}{\mathsf{ob}}   
\newcommand{\mor}{\mathsf{mor}}  
\newcommand{\homsp}[2]{^{#1}_{#2}} 
\newsavebox{\vctikzbox}   
{\begin{lrbox}{\vctikzbox}\begin{tikzpicture}[#1]}%
	{\end{tikzpicture}\end{lrbox}
	\raisebox{-.5\height}{\usebox{\vctikzbox}}}
\begin{document}

\title{Generalized point-free spaces, pointwise}
\author{Steven Vickers}
\address{School of Computer Science, University of Birmingham}
\email{s.j.vickers@cs.bham.ac.uk}

\subjclass[2020]{Primary 03G30; 
Secondary 18F10, 
18F70
}
\keywords{topos, locale, geometric logic, arithmetic universe}

\maketitle
\begin{abstract}
	We survey foundational principles of Grothendieck's generalized spaces, including a critical glossary of the various, and often conflicting, terminological usages.
	
	Known results using generalized points support a fully pointwise notation for these essentially point-free structures. This includes some from dependent type theory to deal with bundles as continuous space-valued maps, mapping base point to fibre.

\end{abstract}

\newcommand{\glit}[1]{                
	
	\vspace{.5\baselineskip}
	\hspace{-2\parindent}
	\textbf{#1}:}

\newcommand{\glemph}[1]{\textbf{#1}} 

\section{Introduction}
\label{sec:Intro}
Point-free topology is well established by now, as an alternative approach to topology that is valid in non-classical models of sets such as elementary toposes, and there gives better mathematics than the traditional point-set topology.

However, it is often misunderstood as being a ``pointless'' treatment that avoids discussing the points and instead works entirely with algebraic structures such as frames in locale theory \cite{StoneSp}, or presentations of frames in formal topology \cite{Sam87}.
A grand generalization of this is Grothendieck's uncovering of generalized topological spaces for which the algebraic structures are toposes.

Actually, point-free topology is not pointless. It has been known since the formulation of classifying toposes that it is frequently possible, and more intuitive, to reason validly in a pointwise manner.

The aim of these notes is to bring together in a clear form the principles used (which are mostly not new), and develop some more systematic notations for manipulating the spaces and their points. In particular, we introduce a fibrewise notation for bundles.

At the same time, we do our best to hijack the language of conventional topology so that a point-free exposition reads as if it is proper topology, and not a distinct discipline.
In this we follow the lead of Joyal and Tierney~\cite{JoyalTier}, who boldly used the word ``space'' for locale. 
Of course, our contention is that it is the point-\emph{set} approach that is not quite proper topology.

The core mechanism that enables pointwise reasoning here is that all constructions must be done \emph{geometrically}, using colimits and finite limits.
Much has been accomplished in this style, but we still do not know the full scope of such geometric mathematics.
Hence this article is not so much finished work as an invitation to further research. Section~\ref{sec:Conc} discusses aspects of this.

In Section~\ref{sec:Glossary} we have provided a glossary that discusses our terminology, and our reasons for choosing it. Any word or phrase \glemph{emphasized thus} has an entry there.

\section{Point-free topology}
\label{sec:PtFree}

The various understandings of the phrase ``point-free topology'' (some canonical examples such as locales and formal topologies are discussed in Section~\ref{sec:Examples}) can be subsumed in the following definition. 

\begin{definition}
\label{def:ptfree}
\label{def:map}
	A \emph{(point-free topological) space} is defined by a \glemph{geometric theory}; its \emph{points} are the models of the theory.	
	A map $f\colon X \to Y$ is then defined by a geometric construction of points $f(x)\oftype Y$ out of points $x\oftype X$.
	
	Suppose $f$ and $g$ are two maps from $X$ to $Y$.
	Then a \emph{specialization morphism} from $\alpha\colon f\to g$ is defined by a geometric construction of homomorphisms $\alpha(x)\colon f(x)\to g(x)$ out of points $x\oftype X$.
	(A homomorphism between models of a geometric theory is given by a carrier function for each sort, all preserving the function and predicate symbols.)
	
	The formal parameter $x$ is a \glemph{generic} point,
	concretely represented in the category $\baseS X$ of \glemph{sheaves} over $X$.
	Any \glemph{specific} point can be used to instantiate (substitute for) it, so thus behaving as an actual parameter.
\end{definition}

By contrast, a \emph{point-set space} is defined by a set of points and a topology in the usual way.
Since our aim is to adopt the language of ordinary topology, the point-free interpretation is our default for spaces and topology. If we wish to refer to point-set spaces, we shall be explicit about it.

If $\thT$ is a geometric theory, we write $[\thT]$ for the corresponding space. In reality this is just a formal notation, but the \emph{maps}, the space morphisms, are different from theory morphisms.

Note that a geometric theory simultaneously specifies both the points (the models) and the topology, the opens being the propositions modulo equivalence.
Unlike the case with a point-set space, the points and topology are not -- indeed, cannot be -- separated out.%
\footnote{
	It is true that, in elementary toposes, each locale has a \glemph{discrete coreflection} that serves as its ``set of points''.
	However, that construction is not geometric, and so can play no useful part in our development.
}

The geometric theory here may be a first-order predicate theory, which means the spaces may be \glemph{generalized},
\ie\ \glemph{toposes}.
The \emph{ungeneralized} point-free spaces are the \glemph{localic} ones.

The models are not restricted to those in \glemph{$\baseS$}, one's preferred \glemph{base} category of sets.
They can be sought in any \glemph{Giraud frame} (= Grothendieck topos = logos), and, correspondingly, the \glemph{points} are ``generalized'' points.

Remarkably, the infinite disjunctions in a \glemph{geometric theory} allow us to \glemph{syntacticize} \glemph{geometric constructions} (Section~\ref{sec:Syntactn}),
and it follows that we make no change to their expressive power if we allow sort constructors for geometric constructions.
This style of extending geometric theories (see Section~\ref{sec:GeoCons}) is discussed in~\cite{LocTopSp} and is closer to the definition of geometric theory in~\cite[B4.2.7]{Elephant1}.
It liberates us from any bureaucratic need to encode the geometric constructions in a pure logic, and we shall use it extensively.
(Contrast it with the duality theorem of ~\cite[3.2.5]{Caramello:TST}, which takes great care to stay within a single signature in the strict logical syntax.)

A consequence of this liberal notion is that first-order theories are still localic, as long as their sorts are built geometrically ``out of nothing''.
An illuminating example in~\cite{LocTopSp} is the real line $\Reals$.
The geometric theory can be given in propositional form, which corresponds to the definition in~\cite{StoneSp}.
However, more natural is a first-order theory that directly models Dedekind sections of the rationals $\Qrat$.
Since $\Qrat$ can be constructed geometrically, it can be used as a sort without spoiling the localic nature of the theory.

\section{Glossary}
\label{sec:Glossary}

A significant part of this paper is to propose a language of point-free topology that can be read as topology, avoiding signals, such as ``sublocale'' for ``subspace'', or ``geometric morphism'' for ``map'', that mark it as something different from what topologists study.
In this section we discuss the language proposed, with reasons for our choices.

At the same time, it is a convenient place to summarize the features of topos theory that we use.
The main references are \cite{MacMoer,Elephant1,Elephant2}; see also \cite{LocTopSp} for a readers' guide through some of their pointwise aspects, and \cite{AnelJoyal:Topologie} for a more recent account that attempts to be careful about the terminology
-- as well as accessing higher dimensions.
For the localic spaces, see \cite{StoneSp,JoyalTier}, and \cite{TVL} for a more elementary introduction.

\glit{arithmetic theory}
See \glemph{theory}.

\glit{arithmetic universe (AU)}
(Originally Joyal; developed \cite{Maietti:AritJ,ArithInd,Vickers:AUSk,Vickers:AUClTop})
Pretopos with parametrized list objects.
Their internal mathematics is a small fragment of the geometric, with finite limits, \emph{finite} colimits, and free algebra constructions.
They seem to occupy a logical sweet spot in that their structure is self-internalizable (hence Joyal's application to G\"odel's Theorem). Their theory is essentially algebraic, hence amenable to standard results in universal algebra, but at the same time their internal maths supports that universal algebra.

Using them, a good fragment of geometric reasoning can be done independently of base $\baseS$ \cite{Vickers:AUClTop},
and this calls into question the reliance on \glemph{formal duals} as definition of space.  

\glit{base (category of sets)}
See \glemph{$\baseS$}.

\glit{$\BTop/\baseS$}
2-category of bounded $\baseS$-toposes, = \glemph{$\GFr$} relative to $\baseS$.

\glit{bundle}
A map viewed as a construction of fibres from base points.
In point-set topology it is non-trivial to get the map from the fibre construction, but in geometric point-free topology it is automatic because of continuity properties of the geometric construction.
See Section~\ref{sec:Bundles}.

\glit{classifying category}
-- for a theory $\thT$. This is a category (with structure) equipped with a \glemph{generic} model to make it a representing object for categories equipped with models of $\thT$.
The structure needed in the category will depend on the structure assumed when presenting $\thT$:
for example, finite products for algebraic theories (in the single-sorted case, the classifying categories are better known as \emph{Lawvere theories}), and finite limits for finite limit theories.

It can also be thought of as the category (with structure) freely presented by $\thT$, with the signature as generators and the axioms as relations.
This can be done by universal algebra, provided the structure involved is finitary~\cite{PHLCC}.
A particular appeal of \glemph{arithmetic universes} is that their classifying categories exist by these finitary means, even though their natural number objects enable them to capture a surprisingly expressive fragment of geometric logic in a finitary way. 

See also~\cite{Crole:CatTypes}.

\glit{classifying topos}
-- for a geometric theory $\thT$.
These are the \glemph{classifying categories} in the geometric case.
Because of the infinitary operations (disjunctions, coproducts), they cannot be constructed by universal algebra.
However, they do exist, and can be constructed in two steps using a \glemph{syntactic category} and a category of \glemph{sheaves} over a coverage. (The second step introduces the colimits that are absent in a syntactic category.)

The definition of ``classifying topos'' is very much in terms of the \glemph{Giraud frame}, and assumes a fixed base $\baseS$.
Then, by \glemph{formal duality}, its \glemph{points} are just the models of $\thT$ (Section~\ref{sec:FormDual}),
and we can understand the classifying topos as the formal dual of ``the \glemph{space of models} of $\thT$''.

In our notation, which is close to standard, the classifying topos of $\thT$ is $\baseS [\thT]$.
This can be read in two ways: either as the Giraud frame generated from $\baseS$ by adjoining the ingredients of $\thT$,
or as the category of $\baseS$heaves over the space $[\thT]$ of models of $\thT$.

\glit{coherent theory}
See \glemph{theory}.

\glit{discrete space}
The space of elements of a \glemph{set}.

\glit{discrete coreflection}
The \glemph{discrete} coreflection of a locale is its ``set of points''.
It can be constructed in any elementary topos, but not geometrically.

Since~\cite{JoyalTier} an internal set (object) corresponds to a local homeomorphism, and an internal frame to a general localic bundle,
the discrete coreflection is approximating a bundle by a local homeomorphism.
Example~\ref{ex:SierpClosed} shows how badly the approximation can lose information. There a non-trivial space has empty set of points, not for reasons of logic, but for purely topological reasons.

\glit{elephant theory}
See \glemph{theory}.

\glit{$\fin$}
The Kuratowski finite powerset, equivalently the free semilattice (under $\cup$).

\glit{formal dual}
Defining things as objects of the dual category of some other things, for examples \glemph{locales} as formal duals of \glemph{frames}. Usually a different language is introduced for the dual object, often (not always) a different mathematical notation.

A major example of this style is the \glemph{topos}-\glemph{logos} duality as described by Anel and Joyal in~\cite{AnelJoyal:Topologie}, where they reclaim the language of spaces for toposes, and introduce a different language of ``logoi'' for the categories (of sheaves) that represent them.
For them a topos $\mathcal{X}$ ``is the same as'', but is notationally differentiated from, its formal dual logos $\mathcal{S}\mathrm{h}(\mathcal{X})$.
(In our notation they correspond to the space $X$ and the \glemph{Giraud frame} $\baseS X$.)

Their account of topos as formally dual to (what they now call) logos is historically accurate, and provides the starting point for a discussion of higher-dimension $\infty$-toposes.
Nonetheless, there are a couple of problems with the use of formal duality as a core part of the narrative,
and the aim of the present article is to propose a way round them, at least for ordinary toposes.

The first problem is that it makes a technical distinction between a locale $X$, which ``is'' a frame, and the corresponding localic topos, which ``is'' a logos.
That distinction is mathematically inessential -- which in fact Anel and Joyal recognize (Section 3.1.1) by using the same notation for both, so that $X$ ``has'' both a frame $\mathcal{O}(X)$ and a logos $\mathcal{S}\mathrm{h}(X)$, just as a point-set space does.

The second problem, and more serious, is that for a classifying topos $[\thT]$, the logos (which we write $\baseS[\thT]$) depends on $\baseS$: as either foundational assumptions about one's ambient logic, or an explicit choice of a base $\baseS$.
For \glemph{arithmetic theories} $\thT$, for example the object classifier, for which $\thT$ has one sort and no other symbols or axioms, there is much we can say about the point-free space $[\thT]$ without knowing or caring what $\baseS$ is.
It has an existence independent of base; but the account by formal duality fails to capture that.
(cf.~\cite{Vickers:AUClTop}. There the independent existence leads a simple life in the 2-category of AU-contexts, while the topos account using formal duality is a highly elaborate structure fibred over a 2-category of possible bases.)

\glit{frame}
Complete lattice with binary meet distributing over all joins.
The notion can be internalized in any \glemph{elementary topos}, with joins in a frame $A$ being described by a morphism $\power A=\Omega^A \to A$ 
-- see~\cite{JoyalTier,Elephant2}.%
\footnote{
	The Elephant references are somewhat scattered. Sections~C1.6 and~C2.4 are the most relevant; see also~B2.3 for internal cocopleteness.
}

\glit{generalized (space)}
The corresponding geometric theory may be first order: thus generalized in the sense of topos as generalized topological space.%
\footnote{
	See Garner's \emph{ionads}~\cite{Garner:Ionads} for an account of what the corresponding point-set generalization might be.
}
This is the default.
cf. \glemph{localic}.

\glit{generalized (point)}
See \glemph{point}.

\glit{generic}
(point or model) The model with which a \glemph{classifying category} is equipped to make a representing object for the category of models.
Intuitively, it is a model ``with no properties'', other than the logical consequences of its being a model, and any construction on it can be instantiated for any \glemph{specific} point.

\glit{geometric construction}
One preserved by inverse image functors,
built up using colimits and finite limits.
An important class of them is the free algebra constructions (see also \glemph{arithmetic universe}).

\glit{geometric theory}
The most familiar form of geometric theory is described in~\cite[D1.1.6]{Elephant2}. It is a multi-sorted first-order theory whose axioms take the form of sequents in context,
$\phi\turnstile{x,y,z,\ldots}\psi$, where the geometric formulae $\phi$ and $\psi$ are built using finite conjunctions $\top,\wedge$, disjunctions $\unjoin$ (possibly infinite -- the \glemph{base} $\baseS$ determines what ``infinities'' are), existential quantification $\exists$, and equality $=$.
\cite{LocTopSp} discusses informally the possibility of including geometric sort constructors as a way to increase the flexibility of expression in a geometric theory, eg by having sorts required to be isomorphic to the integers or the rationals. This is formalized in~\cite[B4.2.7]{Elephant1} and (for the \glemph{arithmetic} fragment) in \cite{Vickers:AUSk}, using sketches. Both build up a theory in a stepwise manner.
We discuss this in more detail in Section~\ref{sec:GeoCons}.

\glit{$\GFr$}
2-category of \glemph{Giraud frames}.

\glit{Giraud frame}
(\cite[B4.5]{Elephant1}, following~\cite{TopCat})
Category satisfying the conditions set out in \glemph{Giraud's Theorem}.
(See also \glemph{topos -- Grothendieck} and \glemph{logos}.)
It has finite limits and ``small'' colimits, suitably behaved, and, though large, it is small enough that it can be generated from a small structure.

Allowing for non-classical \glemph{base} $\baseS$, and using $\baseS$ to govern what ``small'' means \cite[B3.3]{Elephant1}, it comes out as a bounded $\baseS$-topos.

A morphism of Giraud frames is a functor preserving finite limits and small colimits, and comes out as the inverse image half of a geometric morphism.

The terms ``Giraud frame'', ``Grothendieck topos'' (relative to $\baseS$) and ``logos'' have roughly the same meaning. In this paper we shall give preference to ``Giraud frame''; nonetheless, ``logos'' has its merits. 

\glit{Giraud's Theorem}
A category is a \glemph{Giraud frame} iff it is equivalent to the category of sheaves over an internal site in the \glemph{base} $\baseS$.

The original version took $\baseS$ to be the category of classical sets. It has since been generalized to arbitrary $\baseS$ (see~\cite[B3.3.4]{Elephant1}, \cite[C2.4.6]{Elephant2}).

\glit{global (point)}
See \glemph{point}.

\glit{Lindenbaum algebra}
For a propositional theory, this is the algebra of formulae modulo equivalence provable from the axioms of the theory.
The algebraic operations correspond to the logical connectives.
For a first order theory we take the \glemph{classifying category} as the appropriate generalization. 

\glit{locale, localic space}
A space equivalent to one corresponding to a propositional geometric theory, one for which the signature has no sorts.
Then there can be no terms or variables (since they have to have sorts), so the rest of the signature can have no function symbols, and its predicate symbols can have no arguments: they are just propositions.

To avoid misunderstanding, we follow neither~\cite{StoneSp} (locale = \glemph{formal dual} of \glemph{frame}) nor~\cite{JoyalTier} (locale = frame).

\glit{logos}
\cite[Section 3.3]{AnelJoyal:Topologie}
Roughly speaking, these are \glemph{Giraud frame}s -- though, foundationally, there may be room for uncertainty over how precise that is.
See \glemph{formal dual}.

\glit{model}
Understood in the sense of categorical logic, so that models can be sought in any category with sufficient structure to support the logic, not just $\baseS$ (cf. \glemph{point}). The signature (sorts, functions, predicates) is interpreted as objects, morphisms and subobjects; and then the axioms have to be respected.
See~\cite[D1.2.12]{Elephant2}.

\glit{open}
Sub\glemph{sheaf} of 1.

\glit{point}
  A point of $X$ is understood in a generalized sense, as a morphism with codomain $X$. A \glemph{global} point is one whose domain is 1.
  
  The category of point-free spaces is not well-pointed. (This comes down the fact that geometric logic is incomplete. There are not, in general, enough models to support all the inequivalences between geometric formulae.) For that reason, the pointwise reasoning that we are describing here cannot be in terms of global points, so by default all our points are in the generalized sense.
  
  cf. the distinction between generalized and global elements in an elementary topos, as used in the Kripke-Joyal semantics (see~\cite{MacMoer}).
  
  See also \glemph{generic} and \glemph{specific}.

\glit{pointwise/pointless}
With/without reference to points.

\glit{point-free/point-set}
Points described as models of a geometric theory (the default)/elements of a set.

\glit{$\pt{-}$}
We frequently use this as notation for a \glemph{space} constructed in some understood way out of some structure.
Let us emphasize that here it is not meant as the collection of \glemph{global} points. 

\glit{$\baseS$}
An \glemph{elementary topos} (with nno) understood as the base over which \glemph{Giraud frames} are constructed.
For a fixed choice of $\baseS$, spaces can be understood as the \glemph{formal dual}s of Giraud frames, with space $[\thT]$ corresponding to the \glemph{classifying topos} $\baseS[\thT]$.

\glit{set}
An object in a \glemph{Giraud frame}. Thus the object classifier $\Set$ is the ``space of sets''.
See \glemph{discrete space}.

\glit{sheaf}
A sheaf on $X$ is a map from $X$ to the space of \glemph{sets}. Equivalent (obviously) to an object of $\baseS X$; also \cite{JoyalTier} to a local homeomorphism with codomain $X$.

\glit{space of models}
($[\thT]$, for a geometric theory $\thT$)
If we are assuming a fixed \glemph{base} $\baseS$, then we can understand the space of models to be the \glemph{formal dual} $[\thT]$ of the \glemph{classifying topos} $\baseS[\thT]$.

If the \glemph{theory} $\thT$ is \glemph{arithmetic}, then we have the option of understanding $[\thT]$ in a base-independent way using a \glemph{classifying AU}~\cite{Vickers:AUClTop}.  

\glit{specific}
(point or model) Just a \glemph{point} or \glemph{model}, but by contrast with the \glemph{generic} point or model.
A specific point has more properties, specific to it.

\glit{Stone space}
The spectrum $\Spec A$, \ie\ the space of prime filters (ultrafilters), of a Boolean algebra $A$.
$A$ can be recovered as $\Clop (\Spec A)$, the lattice of clopens; and the frame $\Omega (\Spec A)$ is the ideal completion $\Idl A$.

\glit{syntactic category}
\cite[D1.4]{Elephant2}
This is a purely logical analogue of \glemph{classifying category}, for the case where the objects generated by a theory are all subobjects of products of carriers, and can thus be represented by formulae in context.
Thus we do not get, \eg, coproducts or list objects.

\glit{syntacticize}
We use this term to mean the reconciliation of ordinary mathematical notation for \glemph{geometric constructions} with the syntax of geometric logic.
In principle (Section~\ref{sec:Syntactn}), the infinitary disjunctions allow geometric constructs to be characterized up to isomorphism by first-order structure and axioms. However, this is often inconvenient, and a second mode of syntacticization (Section~\ref{sec:GeoCons}) is to adapt the logical syntax in a more type-theoretic way.

\glit{theory -- arithmetic}
A theory expressed using the fragment of geometric logic corresponding to \glemph{arithmetic universes}. It has been formalized in sketch form, using the \emph{AU contexts} of~\cite{Vickers:AUSk}.
See Section~\ref{sec:ArithTh}.

\glit{theory -- coherent}
Like a \glemph{geometric theory}, but in which all disjunctions must be finitary.
Note -- the definition of ``geometric theory'' in~\cite[X.3]{MacMoer} appears to encompass only coherent theories.

\glit{theory -- elephant}
A theory (with respect to $\baseS$) described as in~\cite[B4.2.5]{Elephant1}, by specifying all models in all \glemph{Giraud frames}.
See Section~\ref{sec:ElephantTh}.

\glit{theory -- geometric}
See \glemph{geometric theory}.

\glit{topos} This is a difficult word. Grothendieck said a topos is a generalized topological space, and proposed the name as conveying ``that of which topology is the study''. Our aim here is to capture a notion of space close to Grothendieck's vision.
(Note, however, that there is a huge open question of whether the geometric techniques can capture Grothendieck's applications in algebraic geometry,
which so often use non-geometric mathematics to construct the sites.)

In a sense, the word ``topos'' is self-defeating. As soon as we say, ``Let $X$ be a topos,'' a topologist will reject what follows as being topos-theory, not topology.
In any case, various problems have accumulated that have  polluted the word.
First, there is an unresolvable tension between saying the topos is a generalized space, while at the same time it ``is'' the category of sheaves.
To say ``the topos of sets'' will almost certainly not be understood as ``the generalized space of sets'', \ie\ the object classifier. 
Moreover, the now widespread use of ``topos'' to mean elementary topos has lost the spatial meaning. In itself, an elementary topos is not a generalized space
-- the spaces are geometric morphisms.
All this motivated Anel and Joyal \cite{AnelJoyal:Topologie} to introduce ``\glemph{logos}'',
to allow ``topos'' to retreat to its proper meaning.

Despite the fact that what we are doing is fundamentally topos theory, we have not found a good place for the word ``topos'' in our vocabulary.
Its use here will mainly be to clarify the topos-theoretic basis of what we do.

\glit{topos -- elementary}
The usual definition, except that we assume all our elementary toposes have a natural numbers object (nno).
Without that, we cannot classify first-order geometric theories~\cite[B4.2.11]{Elephant1}.

\glit{topos -- Grothendieck}
We shall use this as synonymous with \glemph{Giraud frame}.
Frequently it is also used as in Grothendieck's original treatment to mean a Giraud frame relative to the category of classical sets (or, more carefully, a Grothendieck universe).
Thus, although the notion of Grothendieck topos makes sense relative to any base $\baseS$, the phrase is too easily read more narrowly as being specific to an ambient classical set theory. We shall generally avoid it.

\glit{ungeneralized (space)}
= \glemph{localic}

\section{Canonical examples}
\label{sec:Examples}
Here we describe more familiar descriptions of point-free topology, and explain how they match Definition~\ref{def:ptfree}.

In each case, given a presenting structure $\Sigma$, we define a space $\pt{\Sigma}$ of points.
We stress again that this is the space corresponding to a geometric theory, \emph{not} the set of global points.

\subsection{Frames}
\label{sec:Frames}

The global points corresponding to a \glemph{frame} $A$ are the frame homomorphisms from $A$ to $\Omega$. \cite{StoneSp} gives several alternative descriptions,
of which the most important geometrically is that of \emph{completely prime filters:} the function $A\mapsto\Omega$ gives a subset $F$ of $A$ -- the ``true kernel'' --, and then the preservation of finite meets and arbitrary joins translates into (respectively) the filter property and complete primeness.
We can now present a geometric theory of completely prime filters of $A$.

\begin{definition}
	\label{def:compPrimeFfilters}
	Let $A$ be a frame. Then the space $\pt {A}$ corresponds to the following propositional geometric theory.
	
	The signature (set of propositional symbols) is $A$ itself.
	To avoid confusion between the geometric logic and the (closely related) lattice structure, we shall here write $\phi_a$ for the propositional symbol corresponding to $a\in A$. (Elsewhere we may be less fussy.)
	
	The axioms fall into three families:
	\[  \begin{array}{ll}
		\phi_a \vdash \phi_b & (a\leq b) \\
		\bigwedge_{a\in S}\phi_a \vdash \phi_{\unmeet S}
		& (S\in \fin A) \\
		\phi_{\unjoin U}\vdash \unjoin_{a\in U}\phi_a
		& (U\in\power A)
	\end{array}  \]
	($\fin A$ denotes the Kuratowski finite powerset of $A$, the free semilattice on the set $A$.
	Here it would in fact suffice to restrict $S$ to be empty or a doubleton: the general $n$-ary case follows from the nullary and binary.)
\end{definition}

A model of this theory can be analysed as a completely prime filter as follows.
First, each proposition $\phi_a$ in the signature has to be interpreted, as a truthvalue. This gives gives our $F\subseteq A$.
Then the first family of axioms says that if $a\leq b$ and $\phi_a$ is true, then $\phi_b$ is true. In other words, if $a\in F$ and $a\leq b$ then $b\in F$: $F$ is up-closed.
The second family says that if $\phi_a$ is true for every $a\in S$, then $\phi_{\unmeet S}$ is true.
In other words, if $S\subseteq F$, then $\unmeet S\in F$: $F$ is a filter.  
The third family says that if $\unjoin U\in F$ then $a\in F$ for some $a\in U$: $F$ is completely prime.

The key property of frames is that $A$ can be recovered from the geometric theory, as the \glemph{Lindenbaum algebra}.
In fact any geometric theory $\thT$ has a frame $\Omega[\thT]$ of opens, the subobjects of 1 in $\baseS[\thT]$.
In the Lindenbaum algebra, the frame structure is got from the disjunctions and finite conjunctions in the logic; but the axioms of the geometric theory force those to match the joins and finite meets in the original frame.

Taking this in conjunction with the fact that propositional geometric theories can be understood directly as frame presentations (see Section~\ref{sec:FrPres}),
we find that the frames provide canonical representatives of the spaces, and
we get an ability to treat localic point-free spaces as \glemph{formal duals} of frames.

Thus the frames appear to provide a highly satisfactory representation of the ungeneralized point-free spaces.
However, there is a big problem from the geometric point of view in that frame structure of $A$ depends on the map $\power A\to A$, and the power set $\power A$ is a non-geometric construction.

If $A$ is a frame in an elementary topos $\catF$, and $f\colon\catE\to\catF$ is a geometric morphism, then $f^{\ast}(A)$ is not necessarily a frame.
Obvious examples come from the case where $\catF$ is classical sets, and $A$ is the subobject classifier $\Omega$ there, which is just the 2-element set $2=1+1$.
$f^{\ast}(2)$ will still be $1+1$ in $\catE$, but this is rarely isomorphic to $\Omega$.

\cite{JoyalTier} show how to complete $f^{\ast}(A)$ to a frame $f^{\sharp}(A)$,
and this can be constructed in three steps. The first is to construct a geometric theory as in our Definition~\ref{def:compPrimeFfilters}. The second is to apply $f^{\ast}$ to all its ingredients, to get a geometric theory in $\catE$: so the signature is $f^{\ast}(A)$, and the axioms are indexed by $f^{\ast}(\leq_A+\fin A+\power A)$ -- and note that $f^{\ast}(\power A)$ is not the same as $\power(f^{\ast}(A))$. This in fact tells us to find points of $\pt{A}$ in $\catE$. The third step is to construct its \glemph{Lindenbaum algebra} in $\catE$.

\subsection{Formal topologies}
\label{sec:FormTop}

The Martin-L\"of school of predicative maths wished to reject powersets as being impredicative, and along with them the frames -- in fact the frame for a discrete space $X$ is just the powerset $\power X$.
These constraints led to a style, known as \emph{formal topology,} in which the space is given by a \emph{base} $B$ (of opens: every open is to be a join of basic opens) and a \emph{coverage.} The coverage comprises \emph{covers,} of the form $a\covered U$ ($a\in B$, $U\subseteq B$) to specify that the basic open $a$ is covered by those in $U$.
As we shall see, this too describes a geometric theory.

There are two main flavours. In the original form~\cite{Sam87}, the coverage describes \emph{all} valid covers, while in \emph{inductively generated} formal topologies~\cite{CSSV:IndGenFT}, the coverage specifies enough covers for the rest to be deduced. As in~\cite{CSSV:IndGenFT}, we shall assume that $B$ comes equipped with a partial order. This makes certain properties, such as meet stability, easier to stipulate.

\begin{definition}
	\label{def:FormTopPts}
	\newcommand{\B}{\mathcal{B}}
	Let $\B=(B, \leq, \covered)$ be an inductively generated formal topology. Then its space $\pt{\B}$ corresponds to the following propositional geometric theory.
	
	The signature is $B$.
	(Again, for clarity we write $\phi_a$ for the propositional symbol corresponding to $a$.)
	
	The axioms are
	\[  \begin{array}{ll}
		\phi_a \vdash \phi_b & (a\leq b) \\
		\top \vdash \unjoin_{a\in B}\phi_a \\
		\phi_a \wedge \phi_b \vdash \unjoin\{\phi_c\mid c\leq a, c\leq b\}
		& (a,b\in A) \\
		\phi_a\vdash \unjoin_{u\in U}\phi_u
		& (a\covered U)
	\end{array}  \]
\end{definition}
The middle two axioms here are for ``flatness''. If the poset $B$ had finite meets, then they would be equivalent to saying that that meet was preserved in the logic, somewhat as for a frame.
Another way to see it is that a model will be given by a subset $F\subseteq B$ that is a filter in a more general sense for posets: if $a,b\in F$, then there is some $c\in F$ with $c\leq a,b$. This filter then must ``split covers'' in the sense that if $a\covered U$ and $a\in F$, then $u\in F$ for some $u\in U$.
These are defined in~\cite{CSSV:IndGenFT} as the ``formal points''.%
\footnote{
	As always, our points, our models, may be defined in categories other than $\baseS$, our base category of sets.
	The ``formal'' points of \cite{CSSV:IndGenFT} seem to be restricted to lie in $\baseS$, in other words to be global points, but it cannot be that ``formal'' here means ``global''. Rather, I understand that ``global'' is implicit, and ``formal'' indicates that the points are in the sense of formal topology rather than point-set topology.
}

\subsection{Frame presentations, GRD-systems}
\label{sec:FrPres}

The formal topologies are an illustration of a broader principle, that we can work with presentations of frames without necessarily calculating the frame itself.

A frame presentation will have a set $G$ of generators, together with a set $R$ of relations, each of the form
\[
\text{frame word in } G \leq \text{frame word in }G
\text{.}
\]
Frame words are built from generators using joins and finite meets, and we see that a presentation by generators and relations is formally isomorphic to a propositional geometric theory with signature and axioms: the theory acts as a presentation for the Lindenbaum algebra.

We can simplify this. First, frame distributivity allows us to express each side of a relation (axiom) as a join of finite meets; then, by the meaning of joins, the relation can be replaced by a set of relations of the form
\[
\text{finite meet in } G \leq \text{join of finite meets in }G
\text{.}
\]
We can introduce notation for this if the relation is $r\in R$:
\[
\unmeet \lambda(r) \leq \unjoin_{d\in D(r)} \unmeet \rho(d)
\text{.}
\]
($\lambda$ and $\rho$ stand for left and right.)

Putting $D=\sum_{r\in R} D(r)$ (D for \emph{disjuncts}), with projection $\pi\colon D\to R$, the whole structure can be summarized~\cite{PPExp} as a \emph{GRD-system:}
\[
\begin{tikzcd}
	& D
	\ar[d, "\pi"]
	\ar[dl, "\rho"']
	\\
	\fin G
	& R
	\ar[l, "\lambda"]
\end{tikzcd}
\]
(Note that the diagram is \emph{not} required to commute.)

\begin{definition}
	\label{def:ptT}
	Let $\Sigma=(G,R,D, \lambda, \pi, \rho)$ be a GRD-system.
	Then its space of points $\pt{\Sigma}$ corresponds to the propositional theory with signature $G$, and axioms
	\[
	\unmeet_{g\in \lambda(r)}g \vdash
	\unjoin_{\pi(d)=r}\unmeet_{g\in \rho(d)}g
	\quad (r\in R)
	\text{.}
	\]
\end{definition}

This is perhaps the most straightforward structure to take the syntax out of the notion of propositional geometric theory.
Both formal topologies and further examples in \cite{PPExp} use extra structure to simplify certain calculations.

\subsection{Sites}
\label{sec:Sites}

\newcommand{\Flat}{\mathsf{Flat}}
\newcommand{\CtsFlat}{\mathsf{CtsFlat}}

We now move on to predicate theories.
For these, in general, frames are not enough and toposes must be used instead.

A central fact underlying much topos theory is that there is a standardized way to present geometric theories, using \emph{sites} $(\catC,J)$.
Here $\catC$ is a small category, and $J$ a \emph{coverage,}
a set of \emph{covers}.
Each cover, covering $A$, is a set of morphisms in $\catC$ with codomain $A$.

Sites are also required to satisfy additional properties in order to make the theory of sheaves work well.
One such, discussed in~\cite[A2.1.9]{Elephant1}, is
that if $C=\{f_i\colon A_i\to A\mid i\in I\}$ covers $A$,
and $g\colon A'\to A$,
then there some $C'=\{h_{i'}\colon A'_{i'}\to A'\mid i'\in I'\}$ covering $A'$ such that each $h_{i'};g$ factors through some $f_i$.

However, even without such a site property, the pair $(\catC,J)$ presents a geometric theory of \emph{flat, continuous functors}
(see~\cite{MacMoer,LocTopSp}), and we shall write $\pt{\catC,J}=\CtsFlat(\catC,J)$ for the corresponding space.

First, given just $\catC$, we have a space $\Flat(\catC)$ of flat functors on $\catC$.
Its theory has a sort $X_A$ for each object $A$ of $\catC$,
and a function symbol $u_f\colon X_A \to X_B$ for each morphism $f\colon A\to B$.

Then the axioms express (i) that the action $u$ respects identities and composition,
\begin{align*}
  & \top \turnstile{x\oftype X_A} u_{\Id_A}(x) = x
    && A \in \obj\catC
  \\
  & \top \turnstile{x\oftype X_A} u_g(u_f(x)) = u_{f;g}(x)
    && \begin{tikzcd}[ampersand replacement = \&]
    	A \ar[r, "f"] \& B \ar[r, "g"] \& C
    \end{tikzcd}
\end{align*}
and (ii) flatness
\begin{align*}
  & \top \turnstile{} \unjoin_{A\in \obj\catC} \Exists {x\oftype X_A} \top
  \\
  & \top \turnstile{x\oftype X_A,y\oftype X_B}
    \unjoin_{\begin{tikzcd}
    		[ampersand replacement = \&, cramped, row sep = -0.8em, column sep = scriptsize]
    		\&A \\ C \ar[ur, "f"] \ar[dr, "g"']\\ \& B
    	\end{tikzcd}}
      \Exists {z\oftype X_C} (x=u_f(z) \wedge y = u_g(z))
    && A, B \in \obj\catC
  \\
  & u_f(x)=u_g(x) \turnstile{x\oftype X_A}
    \unjoin_{\begin{array}{c}
    		h\colon C\to A \\ h;f = h;g
    \end{array}}
      \Exists {y\oftype X_C} x= u_h(y)
    && \begin{tikzcd}[ampersand replacement=\&, cramped]
    	A \ar[r, shift left, "f"] \ar[r, shift right, "g"'] \& B
    \end{tikzcd}
\end{align*}

Next, with the coverage $J$, we extend the theory for $\Flat(\catC)$ with more axioms to give a subspace $\CtsFlat(\catC,J)$.
For each covering family in $J$, say $\{f_i\colon A_i\to B\mid i\in I\}$, we add an axiom
\[
\top \turnstile{y\oftype X_B}
\unjoin_{i\in I}\Exists {x\oftype X_{A_i}} u_{f_i}(x)=y
\text{.}
\]
This is our $\pt{\catC,J}$.

\section{Formal duality}
\label{sec:FormDual}
The aim in this Section is to explain why the rather circular discussion in the Glossary has a well-founded model in topos theory.
Although we shall stay with the phrase \glemph{Giraud frame}, it means roughly the same (at least up to dimension 2) as the \glemph{logos} of~\cite{AnelJoyal:Topologie}.
Their title ``Topo-logie'' expresses the idea of \glemph{formal duality}, that space (topos) and algebra (logos) can be different aspects of the same structure.

As mentioned in Section~\ref{sec:Intro}, we shall be interested in seeking models of a geometric theory $\thT$ in categories $\catE$ other than $\baseS$.
Following topos theory, we take $\catE$ to be a \glemph{Giraud frame}, essentially the same as a \glemph{Grothendieck topos} or a \glemph{logos}.%
\footnote{
	Strictly speaking, if $\thT$ is in purely logical form then all we need is for a category to be a \emph{geometric} category~\cite[A1.4.18]{Elephant1},
	with infinitary joins of subobjects for the infinitary disjunctions.
	In practice that is not sufficiently well behaved to give us good internal mathematics.
	In particular, it is not in general balanced (mono epis need not be isomorphisms), and so, logically, it does not necessarily have unique choice. This is a severe limitation to the internal mathematics, as it means functions cannot be defined by their graphs.
	Also they do not necessarily have colimits,
	so they do not interpret the sort constructors included in the more liberal forms of geometric theory.
}
One can prove that Giraud's categorical conditions are good for modelling the rules of geometric logic (see, \eg~\cite[D1]{Elephant2}).

The appropriate morphisms of Giraud frames are the functors that preserve the geometric structure of colimits and finite limits,
and we thus get a category $\GFr$ of Giraud frames and geometric functors.
In fact, it is a 2-category, with the 2-cells being natural transformations.

It can be proved that any Giraud frame morphism has a right adjoint, so (up to isomorphism of the right adjoints) the Giraud frame morphisms are dual to geometric morphisms.

\subsection{Using geometric theories to present Giraud frames}
\label{sec:PresGFr}

Recall the definition of \glemph{classifying topos}.
We express it in terms of Giraud frame morphisms instead of geometric morphisms, in order to emphasize the analogy with \glemph{classifying categories}. 

\begin{definition}
\label{def:GFrPres}
	Let $\thT$ be a geometric theory, and $\catE$ a Giraud frame. Then $\catE$ is \emph{a classifying topos for} $\thT$ if
	\begin{enumerate}
    \item
        $\catE$ is equipped with a \emph{generic} model $\gamma_\thT$ of $\thT$, and
    \item
        For every Giraud frame $\catF$, the functor
        \[
        \GFr(\catE, \catF) \to \thT(\catF)
        \text{,}\quad
        F\mapsto F(\gamma_\thT)
        \text{,}
        \]
        is one half of an equivalence of categories.
        Here $\thT(\catF)$ is the category of models of $\thT$ in $\catF$.
	\end{enumerate}
\end{definition}

Note that it is 2-categorical. (More correctly, since it asks for an equivalence of categories and not an isomorphism, it is bicategorical.)
Not only do models of $\thT$ correspond to Giraud frame morphisms, but also homomorphisms of models correspond to 2-cells in $\GFr$.

What the definition amounts to is that $\thT$ is being used as generators (the signature) and relations (the axioms) by which $\catE$ is \emph{presented}.
The generic model is the ``injection of generators'' into $\catE$.
Then the universal property is that any model $M$ in $\catF$ extends uniquely (up to isomorphism) to a Giraud frame morphism on the whole of $\catE$ that restricts on $\gamma_{\thT}$ to a transformation to $M$.

The pair $(\catE, \gamma_\thT)$ is defined uniquely up to equivalence, and, if it exists, we generally write $\baseS[\thT]$ for $\catE$.
The notation can be read as ``$\baseS$ freely augmented with a generic model of $\thT$'', analogous to polynomial rings $R[X]$, although we shall see that it can also be read as ``$\baseS$heaves over the space $[\thT]$''.

We can summarize this with a slogan:
\begin{quote}
	$\baseS[\thT]$ is the geometric mathematics (relative to $\baseS$) freely generated by a model of $\thT$.
\end{quote}

\subsection{Points = models}
\label{sec:PointsWhy}

We now explain why the discussions of Sections~\ref{sec:Sites} and \ref{sec:PresGFr} lead directly via \glemph{formal duality} to an understanding of $\GFr^{op}$ as a category of spaces.
In fact this is a very general procedure, valid for any logic for which theories can present \glemph{classifying categories}.
However, for infinitary logics, the construction is more delicate and may be impossible.
For geometric theories, we are able to construct \glemph{classifying toposes} because (i) every theory is equivalent to one presented by a site (Section~\ref{sec:Sites}), and (ii) for a site, the theory is classified by the category of sheaves.

Of course, in geometric logic, it is the infinities that play the decisive role in giving it a topological flavour: in the match between the disjunctions and finite conjunctions, logically, and the unions and finite intersections of opens, topologically. (Nonetheless, it is surprising how much of that can be captured finitarily using \glemph{AUs}.) 

Now consider a geometric theory $\thT$ with classifier $\baseS[\thT]$, and a Giraud frame $\catE$.
By definition, the models of $\thT$ in $\catE$ are equivalent to the Giraud frame morphisms $\baseS[\thT]\to\catE$.
But if we reverse the arrows and work in $\GFr^{op}$, these are just the (generalized) \glemph{points} of $\baseS[\thT]$ at stage $\catE$.
Thus $\GFr^{op}$, with $\thT$ represented by $\baseS[\thT]$, implements the space part of Definition~\ref{def:ptfree}: the points of $\baseS[\thT]$ are the models of $\thT$.

By \glemph{Giraud's Theorem} we know, further, that $\catE$ is of the form $\baseS[\thT']$.
Then a morphism in $\GFr^{op}$ from $\baseS[\thT]$ to $\baseS[\thT']$ is a model of $\thT'$ in $\baseS[\thT]$; and that is a model of $\thT'$ constructed geometrically from the generic model of $\thT$.
With a similar reasoning for 2-cells, we see that thus $\GFr^{op}$ also implements maps from Definition~\ref{def:map}.

Thus we can understand the \glemph{space of models} $[\thT]$ as the classifying topos $\baseS[\thT]$, treated as an object of $\GFr^{op}$.
If $f\colon [\thT]\to[\thT']$, then we shall write $f^\ast\colon\baseS[\thT']\to \baseS[\thT]$ for the corresponding Giraud frame morphism, and $f_\ast$ for its right adjoint.
Thus $(f^\ast \dashv f_\ast)$ is a geometric morphism \emph{from} (in the usual terminology) $\baseS[\thT]$ \emph{to} $\baseS[\thT']$.

Note that the transformation $x\mapsto f(x)$, defined in Definition~\ref{def:map}, is a transformation of \emph{points,} and must not be considered as either $f^\ast$ or $f_\ast$. 
On a $\thT$-model $M$ in $\catE$, it can equivalently be understood in three different ways.
First, as a geometric construction, it can be applied in $\catE$ to $M$ to give $f(M)$.
Second, $M$ corresponds to a Giraud frame morphism $G\colon\baseS[\thT]\to\catE$. This composes with $f^\ast\colon\baseS[\thT']\to \baseS[\thT]$ to give a morphism $\baseS[\thT']\to\catE$, and hence a model of $\thT'$ in $\catE$.
Third, $f(\gamma_\thT)$ is a model of $\thT'$ in $\baseS[\thT]$, and applying $G$ to it gives a model of $\thT'$ in $\catE$.

\section{Syntacticization: translating mathematics into logic}
\label{sec:Syntactn}
In this section we show how to translate from an ordinary mathematics using sets and functions into ingredients of a geometric theory.

On the face of it, it looks like a demonstration of the power of geometric logic, with its arbitrary disjunctions:
we can use geometric structure and axioms to constrain a sort so that, for instance, in any model it is carried by a set isomorphic to a given set $A$.
This is impossible in general for finitary first-order logics.

One might conclude that the techniques provide a useful tool for translating mathematics into the syntax of geometric logic.
This would be a mistake, however, as performing the translation is tedious and not an essential part of the mathematics.
A better conclusion is that, if we are interested in geometric theories, then we might as well engineer their syntax to allow the use of such ``constant'' sorts and functions, and geometric constructions.
We shall say more about this in Section~\ref{sec:GeoCons}.
In fact it is a key tool in the ability to relativize: theories can be built up in stages by incorporating new ingredients.

We shall refer to this process as \glemph{syntacticization} of geometric constructions, whether it is done using infinitary disjunctions (this Section), or by liberalizing the syntax (Section~\ref{sec:GeoCons}).
\subsection{Syntacticizing sets and functions} \label{sec:SyntactnSetFn}
First, we look at how to encode a set $A$ into geometric structure and axioms.
What we do is to introduce a sort $\sigma_A$ with constants $\kappa_a$, one for each $a\in A$.
We then want to axiomatize that every $x$ corresponds to a unique $a$.
For existence, we can obviously use a disjunction $\unjoin_{a\in A}x=\kappa_a$, and it is infinitary if $A$ is infinite.
Less obviously, we also need the general disjunctions for uniqueness, and this is because we do not assume excluded middle.
In Proposition~\ref{prop:setToSort} this appears as
$\unjoin\{\top\mid a=a'\}$. Classically this would appear as $\bot$, but under a side condition $a\neq a'$.

In the following result, note that, although we have stated it for sets and functions ``in $\baseS$'', it still works if we replace $\baseS$ by a Giraud frame $\catE$ over it.

\begin{proposition}
\label{prop:setToSort}
  Sets $A$ (objects in $\baseS$) can be incorporated into geometric theories as sorts $\sigma_A$ so that in any model (in a Giraud frame $p\colon\catF\to\baseS$), the interpretation of $\sigma_A$ is canonically isomorphic to $p^\ast A$.
  
  Likewise, functions $u\colon A\to B$ can be incorporated as function symbols $f_u$.
\end{proposition}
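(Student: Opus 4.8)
The plan is to write the theory down explicitly, extract from the constants a single canonical comparison morphism $p^\ast A\to M(\sigma_A)$, and then show that the two axiom families force it to be epic and monic respectively; since a Giraud frame is a topos and hence balanced, this already yields the asserted isomorphism. Concretely, I would take the signature to consist of one sort $\sigma_A$ together with an $A$-indexed family of constants $\kappa_a\oftype\sigma_A$ (a legitimate index, since $\baseS$ governs which ``infinities'' are permitted). In a model $M$ in $\catF$ with structure map $p\colon\catF\to\baseS$, each $\kappa_a$ is interpreted as a global element of the interpretation $M(\sigma_A)$; because $A$ is internally the $A$-indexed copower of $1$ and $p^\ast$ preserves that colimit, these assemble into a single morphism $\kappa\colon p^\ast A\to M(\sigma_A)$ in $\catF$. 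The whole proposition then reduces to showing $\kappa$ is an isomorphism, naturally in $M$.

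Next I would record the semantic content of the two axiom families. The existence axiom
\[
\top\turnstile{x\oftype\sigma_A}\unjoin_{a\in A}(x=\kappa_a)
\]
says exactly that the union of the images of the $\kappa_a$ is all of $M(\sigma_A)$, i.e. that $\kappa$ is epic. The uniqueness axiom is the delicate one: for each pair it reads
\[
(\kappa_a=\kappa_{a'})\turnstile{}\unjoin\{\top\mid a=a'\}\text{,}
\]
read internally in $\baseS$, with the right-hand side the subterminal object classifying the truth value of $a=a'$. Semantically this asserts that the kernel pair of $\kappa$, viewed as a subobject of $p^\ast A\times p^\ast A\cong p^\ast(A\times A)$, is contained in $p^\ast(\Delta_A)$, the image under $p^\ast$ of the equality subobject $\Delta_A\subseteq A\times A$. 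The reverse inclusion is automatic, since $a=a'$ forces $\kappa_a=\kappa_{a'}$; hence the kernel pair of $\kappa$ is precisely the diagonal of $p^\ast A$, so $\kappa$ is monic.

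Then, invoking balancedness of the topos $\catF$, a morphism that is both monic and epic is an isomorphism, giving $\kappa\colon p^\ast A\xrightarrow{\sim}M(\sigma_A)$; naturality in $M$ is immediate because $\kappa$ is built uniformly from the interpretations of the constants, which are preserved by Giraud frame morphisms. For a function $u\colon A\to B$ I would adjoin a symbol $f_u\colon\sigma_A\to\sigma_B$ with the $A$-indexed axiom family $f_u(\kappa_a)=\kappa_{u(a)}$. Under the two isomorphisms already established, $M(f_u)$ and $p^\ast u$ become equalized after precomposition with the epimorphism $\kappa$, and an epi can be cancelled on the left, so the two maps coincide; this gives the canonical identification of $f_u$ with $p^\ast u$.

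The step I expect to be the main obstacle is the uniqueness axiom. The naive reading ``distinct constants name distinct elements'' is unavailable: $A$ need not have decidable equality in $\baseS$, and, more to the point, $p^\ast$ need not preserve complements, so a formulation using $a\neq a'$ would not transport correctly along $p^\ast$. The whole purpose of the characteristic disjunction $\unjoin\{\top\mid a=a'\}$ is that it encodes the subobject $\Delta_A$ itself, which \emph{is} preserved by the geometric functor $p^\ast$, and thereby delivers genuine injectivity of $\kappa$ without any appeal to excluded middle. Checking that this encoding yields the monomorphism property, rather than some weaker separation condition, is where the care is needed.
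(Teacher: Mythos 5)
Your proposal is correct and follows essentially the same route as the paper: the same signature and axioms, the same canonical comparison map $\kappa\colon p^\ast A\to M(\sigma_A)$ assembled from the constants using preservation of the $A$-indexed copower of $1$, and the same treatment of $f_u$ by adjoining the axioms $f_u(\kappa_a)=\lambda_{u(a)}$. Where the paper simply asserts that ``the axioms force it to be an isomorphism,'' your decomposition -- existence axiom gives epi, uniqueness axiom gives mono via the kernel pair being contained in $p^\ast(\Delta_A)$, then balancedness of the topos -- is a correct and faithful filling-in of that step, including the key observation that $\unjoin\{\top\mid a=a'\}$ encodes the diagonal subobject, which transports along $p^\ast$ precisely because no complementation is involved.
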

\begin{proof}
  Introduce an $A$-indexed family of constants $\kappa_a$ on $\sigma_A$ with axioms
  \begin{align*}
  	& \top \turnstile{x\oftype\sigma_A}\unjoin_{a\in A}x=\kappa_a
  	\\
  	& \kappa_a=\kappa_{a'} \turnstile{x\oftype\sigma_A}
  	\unjoin\{\top\mid a=a'\}
  	&
  	&(a,a'\in A)
  \end{align*}
  They say that each $x$ corresponds to exactly one $a$.
  In $\baseS$, $A$ is an internal coproduct of an $A$-indexed family of copies of 1, and this is preserved by $p^\ast$.%
  \footnote{
    We are concealing a lot here.
    The mechanism required to discuss $\baseS$-indexed coproducts in other Giraud frames is quite elaborate, involving indexed categories~\cite[B1.4.8]{Elephant1}.
  }
  In the model, each $\kappa_a$ gives a global element of $\sigma_A$, and these combine to give a map from $p^\ast A$. The axioms force it to be an isomorphism.
  
  In any $\catF$ we can define an essentially unique model with $\sigma_A$ interpreted as $p^\ast A$, and with $\kappa_a$ as the image under $p^\ast$ of $\{a\}$, so we see that the theory is Morita equivalent to 1.

  Now suppose we have a morphism $u\colon A\to B$ in $\baseS$. We first form a geometric theory with $\sigma_A$ and $\kappa_a$ as above, and similarly $\sigma_B$ and $\lambda_b$ for $B$. In the (essentially unique) model, we have a morphism between the interpretations of $\sigma_A$ and $\sigma_B$, with graph given by the formula in context
  \[
    (x\oftype\sigma_A,y\oftype\sigma_B\mid
      \unjoin_{a\in A} (x=\kappa_a\wedge y=\lambda_{u(a)})
  \]
  Modulo the canonical isomorphisms, it agrees with $p^\ast(u)$.
  
  In the theory, this morphism can be expressed by adjoining a function symbol $f_u\colon \sigma_A\to\sigma_B$ with axioms
  \begin{align*}
	& \top \turnstile{} f_u(\kappa_a) = \lambda_{u(a)}
	&
	& (a\in A)
  \end{align*} 
\end{proof}
Note how the constants $\kappa_a$ make an explicit translation between the logic of the geometric theory ($x\oftype \sigma_A$, $\exists {x\oftype\sigma_A}$, $x=x'$)
and the internal mathematics of $\baseS$
($a\in A$, $\unjoin_{a\in A}$, $\unjoin\{\top\mid a=a'\}$).

\subsection{Translating geometric constructions}
\label{sec:SyntactnGeomCons}

Here we outline the geometric ingredients needed to encode geometric constructions: colimits and finite limits.
Again, it would be a mistake to think of these as useful tools for turning mathematics into first order logic.
Rather, it is an indication that we should aim for a generalized geometric syntax that encompasses these constructions.

\subsubsection{Pullbacks}

Suppose we have a cospan $f_i\colon\sigma_i \to \sigma_3$ ($i=1,2$).
We can force a sort $\sigma$ to give the pullback using projections $p_i\colon\sigma\to\sigma_3$ and axioms
\begin{align*}
  	& \top  \turnstile{x\oftype\sigma} f_1(p_1(x))=f_2(p_2(x))
	\\
	& p_1(x)=p_1(x') \wedge p_2(x)=p_2(x')
	   \turnstile{x,x'\oftype\sigma} x=x'
	\\
	& f_1(y_1) = f_2(y_2)
	  \turnstile{y_i\oftype\sigma_i}
	  \Exists {x\oftype\sigma} (y_1=p_1(x) \wedge y_2=p_2(x))
\end{align*}

If we have a span $g_i\colon \tau \to \sigma_i$ with $f_1\circ g_1 = f_2\circ g_2$, then we can define the fillin $g\colon\tau\to\sigma$ as a logical formula for the graph,
\[
  \gamma(z,x) := g_1(z) = p_1(x) \wedge g_2(z) = p_2(x)
  \text{.}
\]
This is provably functional, and unique as fillin.

The terminal object is easily translated into logic, and so we have all finite limits.

Note that no disjunctions are needed for these.
(See~\cite{PHLCC} for an account of the finite limits using a minimal amount of partial logic.)

\subsubsection{Coproducts}

Suppose we have a family of sorts $\sigma_i$ ($i\in I$),
and we want to introduce a sort $\sigma$ and constrain it to be the coproduct $\coprod_i \sigma_i$.
Let us also introduce the coproduct injections $\inj_i\colon\sigma_i\to\sigma$.
We know that coproducts in any Giraud frame are disjoint;
conversely, if the $\inj_i$s are disjoint and cover $\sigma$, then $\sigma$ (in any model) is the coproduct of the $\sigma_i$s. We can force this with axioms
  \begin{align*}
	& \top \turnstile{x\oftype\sigma}\unjoin_{i\in I}\Exists {y\oftype\sigma_i} x=\inj_i(y)
	\\
	& \inj_i(y)=\inj_{i'}(y') \turnstile{y\oftype\sigma_i, y'\oftype \sigma_{i'}}
	\unjoin\{y=y'\mid i=i'\}
	&
	&(i,i'\in I)
\end{align*}
In fact, Proposition~\ref{prop:setToSort} was a special case of this, for copowers of 1.

\subsubsection{Natural numbers}

The natural number object $N$ is also a constant set, but it can be dealt with in a more structured way using constant $0$ and successor function $s\colon N\to N$, with axioms
\begin{align*}
  & s(x)=0 \turnstile{x\oftype N} \bot
  \\
  & s(x)=s(x') \turnstile{x,x'\oftype N} x=x'
  \\
  & \top \turnstile{x\oftype N}
    \unjoin_{n\in\Nat} x = s^n(0)
\end{align*}
The term $s^n(0)$ is not strictly a term in first-order logic, but an indication of how can be built recursively in $n$.
\subsubsection{List objects}

Suppose $\sigma$ is a sort. We can then define the list sort $L=\List\sigma$ using constant $\epsilon$ (empty list), constructor map
$\cons\colon\sigma\times L \to L$, and axioms
\begin{align*}
  & \cons(x,l)=\epsilon \turnstile{x\oftype\sigma, l\oftype L} \bot
  \\
  & \cons(x,l)=\cons(x',l') \turnstile{x,x'\oftype \sigma,l,l'\oftype L} x=x' \wedge l=l'
  \\
  & \top \turnstile{l\oftype L}
\unjoin_{n\in\Nat} \Exists {x_1,\ldots,x_n\oftype\sigma} l = \cons(x_1\ldots\cons(x_n,\epsilon)\ldots)  
\end{align*}
Much as with the natural number object, the axioms allow an element $l\oftype\List\sigma$ to be decomposed uniquely as a finite list of elements of $\sigma$,
and then the fillin function for the universal property (of parametrized list object) can be expressed, via its graph, as a disjunction showing the result for each explicit list.

\subsubsection{Coequalizers}

Coequalizers of equivalence relations (as we get from pretopos structure) are relatively easy.
Suppose we have an equivalence relation $\begin{tikzcd}
	\sigma_1
	  \ar[r, shift left, "p_1"]
	  \ar[r, shift right, "p_2"']
	& \sigma_2
\end{tikzcd}$.
Then we can structure the quotient $\sigma$ using a function $p\colon \sigma_2 \to \sigma$ and axioms
\begin{align*}
	& \top \turnstile{e\oftype \sigma_1} p(p_1(e))=p(p_2(e))
	\\
	& \top \turnstile{x\oftype \sigma}
	  \Exists {y\oftype \sigma_2} x=p(y)
	\\
	& p(y)=p(y') \turnstile{y,y'\oftype\sigma_2}
	  \Exists {e\oftype\sigma_1} (y=p_1(e) \wedge y'=p_2(e))
\end{align*}

The more general coequalizer is more complicated, but can be constructed using $\List\sigma_1$ to construct the transitive equivalence relation generated by $\sigma_1$.

\subsubsection{Finite powersets}

The case of the finite powerset $\fin$ is dealt with in some detail in~\cite{TopCat}, which also shows how to incorporate \emph{finitely bounded} universal quantification into geometric logic, translating $\Forall {x\in S}\phi$ into
\[
  \unjoin_{n=0}^\infty \Exists {x_1,\ldots,x_n}(S=\{x_1,\ldots x_n\}\wedge \unmeet_{i=1}^n \phi[x_i/x])
  \text{.}
\]

\section{Geometric constructions}
\label{sec:GeoCons}
Section~\ref{sec:Syntactn} showed how to translate mathematics, or at least the geometric part of it, into geometric theory ingredients.
However, as we suggested there, that may not be such a good idea, as the syntax of first order logic is not that convenient for doing mathematics.
Here we look at a formalism that is more in the nature of type theory than of logic.

In~\cite{LocTopSp}, an informal idea of ``geometric type theory'' was described, in which sort constructors would be allowed provided the constructions were geometric (preserved by inverse image functors of geometric morphisms).

The intuition is that certain constructions can be characterized, uniquely up to isomorphism, by geometric structure and axioms.
In Section~\ref{sec:Syntactn} we saw this for ``constant'' constructs, objects and morphisms of $\baseS$, 
and further constructions can be justified on a case-by-case basis.

Encoding the constructions into geometric structure and axioms is tedious, and mathematically unnatural, and so we are led to seek a ``geometric type theory'' in which they can be used directly as sort constructors in the logic. Already in~\cite{TopCat,SFP}, this had been used to justify finitely bounded universal quantification as part of geometric logic, with constructed sorts $\fin X$ for Kuratowski finite powersets (free semilattices).
The principle was illustrated in~\cite{LocTopSp} with a first-order theory of Dedekind sections, using the rationals $\Qrat$ as a geometrically constructed sort.

The question then is how we might formalize this idea of geometric sort constructors, bearing in mind that we have to construct not only the sorts, but also morphisms; and that the constructions too may depend on morphism. As an example, consider constructing and exploiting colimits and finite limits, the primaeval geometric constructions. 

We outline here (Sections~\ref{sec:ElephantTh} and~ \ref{sec:ArithTh}) two extreme solutions. Each involves building a geometric theory up in steps of adjoining ingredients, in a way that includes using geometric constructions.
The first, of Johnstone~\cite[B4.2.7]{Elephant1}, allows for all constructions geometric over a fixed base $\baseS$.
The second, of Vickers~\cite{Vickers:AUSk} (but anticipated in~\cite{TopCat}), uses \glemph{arithmetic universes} to capture a fragment of the geometric constructions valid over all choices of \glemph{elementary topos} (with nno) $\baseS$.

\subsection{Elephant theories}
\label{sec:ElephantTh}
The two volumes~\cite{Elephant1,Elephant2} of Johnstone's Elephant define geometric theories twice. The second (D1.1.6) is in the conventional style of a first-order theory. It is essentially syntactic, though there are issues with defining infinite disjunctions, which have to refer back to an ambient set theory.

The first (B4.2.7) is rather different, and deals explicitly with geometric theories relative to a base $\baseS$. It start by defining ``theory'' $\thT$ in general in terms of the models. It is known for geometric theories that there may be insufficient models in $\baseS$, so a category $\thT(\catE)$ of models has to be specified for \emph{every} bounded $\baseS$-topos $\catE$ (\glemph{Giraud frame} relative to $\baseS$).
If $\thT$ is geometric, then for a geometric morphism $f\colon\catE\to\catF$ the inverse image functor $f^\ast$ should map models in $\catF$ to models in $\catE$.
This is formalized by saying that $\thT$ should be an indexed category over $\BTop/\baseS$.

Following~\cite{Vickers:AUClTop}, where there is further discussion of such theories, we call them \emph{elephant theories}.%
\footnote{They are very large.}

A \emph{geometric construct} in $\thT$ is then defined~(B4.2.5) as a $\BTop/\baseS$-indexed functor $F\colon\thT\to\thobj$, where $\thobj$ is the theory of \glemph{sets}, $\thobj(\catE)=\catE$.
As Johnstone says,
``Informally, $F$ may be thought of as something which `constructs' an object $F(M)$ of $\catE$ from a $\thT$-model $M$ in an $\baseS$-topos $\catE$, and a morphism $F(M)\to F(N)$ from each morphism of $\thT$-models $M\to N$, in a manner which is compatible with composition of $\thT$-model morphisms and with the functors $f^\ast$ between $\thT$-models in different $\baseS$-toposes.''

Notice how it is the compatibility with functors $f^\ast$ that gives the geometricity as preservation by inverse image functors.

The geometric constructs are essentially the objects of the classifying topos, the \glemph{sheaves} over the space of models.
They include both ``constant'' constructs (objects of $\baseS$, cf. Section~\ref{sec:SyntactnSetFn}) and more general geometric constructions (cf. Section~\ref{sec:SyntactnGeomCons}).

We may consider extending $\thT$ to a theory $\thT'$ whose models are pairs $(M,\Gamma)$, where $M$ is a model of $\thT$ and $\Gamma=F(M)$.
However, we have made no essential change, as the new elephant theory is isomorphic to the old one.

\begin{definition} \label{def:geoThEleph}
A \emph{geometric theory} according to~\cite[B4.2.7]{Elephant2} is built up in finitely many steps of the following kinds, each extending a theory $\thT$ to give a theory $\thT'$ whose models are pairs $(M, \Gamma)$ where $M$ is a model of $\thT$ and $\Gamma$ is some other structure, in general dependent on $M$.

\begin{itemize}
\item
  \emph{(Primitive sort)}
  $\Gamma$ is a set.
  
  Syntactically, this would adjoin a new sort.%
  \footnote{
    Actually, \cite{Elephant2} requires steps of this kind all to be done at the start. That makes no essential difference, as these steps do not interact with the others, and so can be moved earlier.
  }
\item
  \emph{(Simple functional extension)}
  Suppose $F$ and $F'$ are two geometric constructions. We take $\Gamma$ to be a morphism $f\colon F(M) \to F'(M)$.
  
  Syntactically, this would correspond to incorporating $F$ and $F'$ as new sorts, constrained by the definitions of them as geometric constructions, and then adjoining a function symbol between them.
\item
  \emph{(Simple geometric quotient)}
  Suppose $F$ and $F'$ are two geometric constructions, and $f\colon F \to F'$ is a natural transformation.
  We take $\Gamma$ to be an inverse of $f_M\colon F(M) \to F'(M)$.
  
  Syntactically, this would correspond to adjoining a function $g\colon F'(M) \to F(M)$, with axioms to require its composites with $f_M$ to be identities.
  Thus $\top \vdash_{x\oftype F(M)} g(f_M(x))=x$ and similarly for $y\oftype F'(M)$.
  
  Note that any geometric sequent can be worked into this form, as satisfying $\phi \vdash_x \psi$ is equivalent to having an inverse for the inclusion $\phi\wedge\psi \to \phi$.
\end{itemize}
\end{definition}

Notice how the definition implicitly allows geometric constructs to be incorporated into theories as what would, syntactically, be sort constructors, though semantically, as elephant theories, there is no essential change.
Implicitly, this allows us to \glemph{syntacticize} both constants and constructions.

\subsection{Arithmetic theories}
\label{sec:ArithTh}
We are seeking to describe as large as possible a class of geometric constructions, allowing a range of possible $\baseS$s.
The elephant approach does this separately for each $\baseS$. A different approach is to seek a single class of constructions that work for every $\baseS$ (with nno), and this is done in~\cite{Vickers:AUSk} using the constructions of \glemph{arithmetic universes}.%
\footnote{
  We conjecture that this captures exactly the geometric constructions that can be implemented in every elementary topos with nno.
}
The sorts constructed are the finite limits, \emph{finite} colimits, and parametrized list objects, together with the derivable morphisms and equations between morphisms.
These are all expressed in sketch form, and the \emph{contexts} defined there, which we shall think of as \glemph{arithmetic theories}, are built up stepwise in the manner of Definition~\ref{def:geoThEleph}.
The big difference is that it does not rely on the semantic notion of geometric construct in terms of elephant theories, but instead has explicit steps for \glemph{syntacticizing} some particular ones: initial and terminal objects, pullbacks, pushouts and list objects, together with the associated morphisms and equalities of morphisms.
These are set up as ``equivalence extension steps'', to acknowledge the fact that semantically they make no difference.

(Note that we have no $\baseS$ and cannot syntacticize constants other than those that occur already in the initial AU, constructed using finite limits, finite colimits, and list objects.)

Other equivalence extension steps account for the pretopos nature of the semantics, and for the fact that the sketch-based syntax uses graphs instead of categories (this is in order to keep the sketches finite).

After that, the steps for more general extensions use primitive sorts and functional extensions, as in Definition~\ref{def:geoThEleph}, and steps to impose equalities between morphisms.

\cite{Vickers:AUClTop} then shows how AU techniques can give results for toposes in a base-free way.

Note that, despite the restriction to finite colimits, the arithmetic theories are definitely stronger than \glemph{coherent theories}. In finitary logic, the L\"owenheim-Skolem Theorem shows that no set of axioms can suffice to force a sort to be isomorphic to the natural numbers, so, in a first order theory, infinitary disjunctions are required if we are to do that. Thus, in admitting the natural numbers as a sort constructor, we must be going non-coherent.

Having made that step, it is surprising how expressive the arithmetic theories are. For example, the real line $\Reals$ can be expressed in arithmetic form. Nonetheless, it is at present a huge open question how much of the applications of topos theory can be made arithmetic.

\section{Relative theories}
\label{sec:RelTh}
Suppose we have a family of spaces $\pt{A}$ that can be defined for structures $A$ of a certain kind.
If $A$ lives in a Giraud frame $\catE$, then $\pt{A}$ will correspond to an internal theory, with an internal signature and internal set of axioms. Then by changing our base from $\baseS$ to $\catE$, we can generate a classifying topos $\catF = \catE\pt{A}$, a bounded $\catE$-topos.
We have \emph{relativized} the process to $\catE$ (from $\baseS$).

Let us look at the localic example of $A$ being a frame. If $\catE$ is localic over $\baseS$, then the frame for $\catF$ can be calculated~\cite{JoyalTier} as $p_\ast(q_\ast(\Omega_{\catF}))$ using the geometric morphisms
\[
  \begin{tikzcd}
  	\catF=\catE[\pt{A}]
  	  \ar[r, shift left=1.5ex, "q_\ast"]
  	  \ar[r, phantom, "\top" description]
  	& \catE
  	  \ar[r, shift left=1.5ex, "p_\ast"]
      \ar[r, phantom, "\top" description]
      \ar[l, shift left=1.5ex, "q^\ast"]
    & \baseS
      \ar[l, shift left=1.5ex, "p^\ast"]
  \end{tikzcd}
\] 

Since bounded geometric morphisms are closed under composition, $\catF$ is also bounded over $\baseS$, and so one might well ask what it classifies.
In a sense, that's easy -- it classifies completely prime filters over $p_\ast(q_\ast(\Omega_{\catF}))$.
However, the calculations there of $\Omega_{\catF}$,
$p_\ast$, and $q_\ast$ are non-geometric,
and seem to depend on \emph{constructing} the Giraud frames.

What we shall show is that it is possible to circumvent that, and work entirely with the geometric descriptions.
For frames, the problem geometrically is that the theory of frames is not geometric -- see Section~\ref{sec:Frames}.
When we referred above to structures ``of a certain kind'' that give rise to theories, our central technique is to use structures that (unlike frames) can be described geometrically.
For example, if $A$ is a GRD-system then we have defined $\pt{A}$.
This is discussed in some detail in~\cite{PPExp}.

Let us first review the general question where structures $A$ (in $\baseS$) ``of some suitable kind'' (which we now suppose can be described geometrically) give rise to spaces $\pt{A}$.
The theory for $\pt{A}$ can equivalently be expressed in a form in which the ingredients of $A$ have been syntacticized as constants.
This uses either the explicit geometric axiomatization as in Section~\ref{sec:Syntactn}, or, and more conveniently, the type-theoretic ideas of Section~\ref{sec:GeoCons}.

The trick then is to express the theory ingredients of $\pt{A}$ in terms of the syntacticized $A$ rather than the original external $A$. We shall see this in Proposition~\ref{prop:ptT}.
After that, it doesn't matter whether $A$ was a constant or declared as part of a theory, and this allows us to built $\pt{A}$ as an extension of a theory that $\catE$ classifies.

\subsection{Example: GRD-systems}
\label{sec:GRDex}
Suppose $\Sigma$ is a GRD-system as in Definition~\ref{def:ptT}.
Then the theory for $\pt{\Sigma}$ is propositional,
with propositions indexed by $G$ and axioms indexed by $R$ and expressed using the other ingredients of $\Sigma$.
What we shall show now is how to transform that into a first-order theory that provides the key to relativizing.

Our first step is to construct sorts ``out of nothing'' to syntacticize $\Sigma$, replicating it in the signature.
For clarity, let us call its sorts $\sigma_G$, $\sigma_R$, $\sigma_D$ and $\sigma_{\fin G}$. (In practice we could just call them $G$, $R$, $D$ and $\fin G$.)
Each can then be equipped with constants for the elements, for instance $\kappa_g\colon 1\to \sigma_G$ for $g\in G$.

In the style of Section~\ref{sec:SyntactnSetFn}, we should then equip them with axioms to ensure that in any model (in a bounded $\baseS$-topos $p\colon \catE\to\baseS$) the interpretation of $\sigma_G$ is isomorphic to $p^{\ast}(G)$, and so on.

Obviously those explicit translations would be tedious if we actually did them. Moreover, they are mathematically inessential, being just a concession to the strict syntax of first order logic. It would therefore be better to rely on the knowledge that they can be done, and treat the external language as something that can be imported directly into geometric theories. In this style, we simply import $G$, $R$ and so on as sorts -- and let's not be fussy about calling them $\sigma_G$ and so on.

Once we have done this, we can rewrite the theory in Definition~\ref{def:ptfree} in a way that turns out to be highly beneficial.

\begin{proposition}
	\label{prop:ptT}
	The theory for $\pt{\Sigma}$ in Definition~\ref{def:ptT} can equivalently be presented as follows.
	
	The signature has a unary predicate $F$ on $G$.
	There is one axiom,
	\[
	\Forall {g\in\lambda(r)} F(g) \turnstile{r\oftype R}
	\Exists {d\oftype D}(\pi(d)=r \wedge \Forall {g\in \rho(d)}F(g))
	\text{.}
	\]
\end{proposition}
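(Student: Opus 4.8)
The plan is to establish, for every Giraud frame $p\colon\catE\to\baseS$, a natural equivalence between the category of models of the new theory and the category of models of the propositional theory of Definition~\ref{def:ptT}; this exhibits the two as Morita equivalent and hence as presenting the same space $\pt{\Sigma}$. First I would fix the syntacticization: by Proposition~\ref{prop:setToSort} the sorts $G$, $R$, $D$ together with the maps $\lambda,\pi,\rho$ are constant, so in any model they are interpreted (up to canonical isomorphism) as $p^\ast G$, $p^\ast R$, $p^\ast D$ with $p^\ast$ of the structure maps, while the constants $\kappa_g$, $\kappa_r$, $\kappa_d$ pick out the corresponding global elements. Since each constant sort is Morita equivalent to $1$, the only genuine content carried by the new theory is the predicate $F$ and its single axiom.

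The first real step is to match the data of a model. Interpreting the unary predicate $F$ on $G$ amounts to choosing a subobject of $p^\ast G$; and since $p^\ast G=\coprod_{g\in G}1$ is a copower of $1$, such a subobject is the same thing as an external $G$-indexed family of truth values $F(\kappa_g)\subseteq 1$. This is precisely the data of a model of the propositional theory, with $F(\kappa_g)$ playing the role of the proposition indexed by $g$. Homomorphisms on either side are comparisons of these truth values, so the correspondence is functorial and natural in $\catE$; it remains only to check that the two axiom systems carve out the same subcategories.

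Next I would match the axioms. Because $R$ is constant, its interpretation $p^\ast R=\coprod_{r\in R}1$ is covered by the injections $\kappa_r$, so a sequent in the context $r\oftype R$ holds in a model if and only if all of its instances at the $\kappa_r$ hold. Fixing $r$, I translate each side using the syntacticizations of Section~\ref{sec:SyntactnGeomCons}: since $\lambda(r)\in\fin G$ is Kuratowski finite, the finitely bounded universal $\Forall{g\in\lambda(r)}F(g)$ reduces in each model to the finite meet $\unmeet_{g\in\lambda(r)}F(\kappa_g)$; dually, since $D$ is constant, the existential $\Exists{d\oftype D}(\pi(d)=r\wedge\Forall{g\in\rho(d)}F(g))$ ranges over the fibre $\{d\mid\pi(d)=r\}$ and becomes the disjunction $\unjoin_{\pi(d)=r}\unmeet_{g\in\rho(d)}F(\kappa_g)$. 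Assembling these, the instance at $r$ reads $\unmeet_{g\in\lambda(r)}F(\kappa_g)\vdash\unjoin_{\pi(d)=r}\unmeet_{g\in\rho(d)}F(\kappa_g)$, which, under the identification of $F(\kappa_g)$ with the proposition indexed by $g$, is exactly the axiom of Definition~\ref{def:ptT} for that $r$.

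I expect the main obstacle to be the quantifier bookkeeping rather than any deep difficulty. The two points needing care are: justifying that a sequent in the context $r\oftype R$ may be tested pointwise at the global elements $\kappa_r$, which rests on the copower injections of the constant sort $R$ being jointly covering; and verifying that both finitely bounded universals genuinely collapse to finite meets in every model, which rests on $\lambda(r)$ and $\rho(d)$ being Kuratowski finite together with the translation of bounded universal quantification recalled in Section~\ref{sec:SyntactnGeomCons}. Once these translations are in place the equivalence of the two theories is immediate, and its naturality in $\catE$ is automatic, since every clause is phrased through $p^\ast$ of fixed external data.
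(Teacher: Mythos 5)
Your proposal is correct in substance, but it cannot be judged ``the same approach as the paper'' for a simple reason: the paper offers no proof at all here, only the citation ``See~\cite{PPExp}''. So you have supplied a genuine argument where the paper defers to the literature. Your route is the natural Morita-equivalence one: test both theories against an arbitrary Giraud frame $p\colon\catE\to\baseS$; observe via Proposition~\ref{prop:setToSort} that the constant sorts contribute nothing, so a model of the new theory is exactly a subobject of $p^\ast G$, i.e.\ an interpretation of the $G$-indexed propositional signature; then decompose the single sequent in context $r\oftype R$ along the copower $p^\ast R\cong\coprod_{r\in R}1$ into an $R$-indexed family of propositional sequents, with the bounded universal over the Kuratowski finite $\lambda(r)$ collapsing to the finite meet and the existential over $D$ restricted by $\pi(d)=r$ collapsing to the join over the fibre of $\pi$ --- reproducing the axioms of Definition~\ref{def:ptT} exactly. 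Homomorphisms match on both sides (both categories of models are preorders, by inclusion of interpreting subobjects), and naturality in $\catE$ is automatic since everything is expressed through $p^\ast$ of fixed data.

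The one place your wording needs care is the reliance on global elements: ``tested pointwise at the $\kappa_r$'' and ``external $G$-indexed family of truth values'' are literally valid only when $\baseS$ is classical, or at least when $R$ and $G$ have enough global points; over a general base an object may have none. The robust form of your step is internal: subobjects of the copower $p^\ast R\cong\coprod_{r\in R}1$ correspond to morphisms $R\to p_\ast\Omega_{\catE}$ ordered componentwise, so the sequent holds iff the \emph{internally} $R$-indexed family of its instances holds, and that family is precisely the (likewise internally indexed) axiom family of Definition~\ref{def:ptT}. This is the same concealment the paper itself admits in the footnote to Proposition~\ref{prop:setToSort}, so it is a gloss rather than a gap --- but it is worth making the internal reading explicit, since the entire purpose of the proposition is to enable relativization over arbitrary, possibly non-classical, bases.
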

\begin{proof}
	See~\cite{PPExp}.
\end{proof}
In practice it may be more mathematically natural to treat $F$ as a subset of $G$ and write $g\in F$ (or, type theoretically, $g\oftype F$) instead of $F(g)$.
We don't want to be rigid about syntax -- the aim is to be mathematically natural.

If, as anticipated in Section~\ref{sec:FrPres}, we introduce some dependent type theory with
$r\oftype R\turnstile{}D(r)\oftype\Set$ and $D=\sum_{r\oftype R} D(r)$,
then the axiom of Proposition~\ref{prop:ptT} becomes
\[
\Forall {g\in\lambda(r)} g\oftype F \turnstile{r\oftype R}
\Exists {d\oftype D(r)} \Forall {g\in \rho(d)}g\oftype F
\text{.}
\]

So far, $G$ etc. are still just ``constant'' sets, imported from $\baseS$.%
\footnote{
  At least, we are discussing them as if they were.
  In actual fact, neither you nor I have a clue what sets they are.
  They are entirely hypothetical and our discussion awaits actual sets to substitute for the formal symbols.
  In reality, they are already behaving like ``declared sorts''.
}
However, the formulation in Proposition~\ref{prop:ptT} does not care whether they are constants or declared sorts, declared in a subtheory for a space $\GRD$ of GRD-systems,
which is then extended to a theory whose models are pairs $(\Sigma,F)$ where $\Sigma$ is a GRD-system and $F$ a point of $\pt{\Sigma}$.
We shall write this in a notation like that for \emph{contexts} in dependent type theory: $(\Sigma\oftype\GRD, F\oftype\pt{\Sigma})$.

This implicit translation between the external logic and the internal mathematics has thus led us to the important idea of \emph{relativization}, with geometric theories relative to a topos.

For example, suppose we have a map $\Sigma\colon X\to \GRD$, in other words a GRD-system in $\baseS X$.
Then we can make an extended theory $(x\oftype X, F\oftype\pt{\Sigma(x)})$.
This follows the stepwise construction of geometric theories as in Section~\ref{sec:GeoCons}.
Iterating this, we can easily get a tower of relativizations, with internal at one level being external at the next.

\subsection{Sites} \label{sec:RelSites}

The relativization process illustrated in Section~\ref{sec:GRDex} can be applied quite generally to sites. Whereas the geometric theory in Section~\ref{sec:Sites} describes the flat continuous functors rather explicitly, we can give a more internal description.
Of course, this is well known, and we shall only sketch it. The point we wish to make is that, using the ideas of Section~\ref{sec:GeoCons}, this description too can be understood as a geometric theory.

Suppose $(\catC, J)$ is a site.
The first step is to represent $\catC$ and $J$ as constants.
Thus $\catC$ will have sorts $\obj\catC$ and $\mor\catC$ for objects of objects and morphisms, together with functions for domain, codomain, etc.
There is an essentially algebraic theory of categories, and it is well known that the theories can be described geometrically using predicates for the graphs of the partial operations (such as composition of morphisms). However, now we know that pullbacks can be characterized geometrically, we can use them together with total operations.

The coverage $J$ can be represented in a similar way to the relations in a GRD-system.
Each cover $C=\{f_i\colon A_i\to A\mid i\in I_C\}\in J$ has a codomain $A$ in $\obj\catC$, and a set $I_C$ of morphisms $f_i$ with that common codomain.
Thus we can represent the structure with a function $J\to \obj\catC$, and a pair of functions $I \to J$ and $I \to \mor\catC$, suitably axiomatized, with $I=\sum_{C\oftype J}I_C$.

Note also that the site condition can be made geometric, albeit not directly, as it has some alternating quantifiers: $\Forall {C,g} \Exists{C'} \Forall {i'}\Exists {i}\ldots$.
The choice of $C'$ needs to be introduced as geometric structure, as a function of $C$ and $g$, and then the condition becomes
$\Forall {C,g}\Forall{i'\oftype C'(C,g)}\Exists {i}\ldots$,
which can be expressed as a geometric sequent.
The extra structure makes no difference to the theory $\pt{C,J}$.

Then the family $(X_A)$ of sets, indexed by objects $A$ of $\catC$, is represented as a sort $X$ with function symbol $p\colon X \to \obj\catC$.
The action by $\catC$ is then given by a function
$\alpha \colon X \times_{\obj\catC} \mor\catC \to X$, axiomatized to require the identity and composition laws for the action.

Now flatness can be axiomatized geometrically, with the external disjunctions of Section~\ref{sec:Sites} replaced by internal existential quantification,
and so too can continuity.

We have now done what is needed for relativization of sites: we have shown how a single geometric theory in the style of Section~\ref{sec:Examples} can be converted into a form that can be taken as an extension of a geometric theory, in the style of Section~\ref{sec:GeoCons}.

However, this is not quite the end of the story. In many ways the explicit description of a functor, with a family of sets indexed by objects of $\catC$, is more natural than the description using pullbacks.
In Section~\ref{sec:CtsFlatTT}
we shall say something about how to recover the naturality of the first style without losing the advantages of the second. 
It will be by a type-theoretic style in which object, as element of $\obj\catC$, is understood as a generalized element.

\subsection{Theory extensions as relativizations}
\label{sec:ExtnRel}

In each of the two previous subsections, we have constructed a theory extension $\thT_1$ of a smaller theory $\thT_0$.

In Section~\ref{sec:GRDex}, $\thT_0$ was for the space $\GRD$ of GRD-systems, and then for $\thT_1$ we extend this with $F$ axiomatized to be a point.
We may write this in the type-theoretic notation of contexts as $\Sigma\oftype \GRD, F\oftype\pt{\Sigma}$.
Thus we have two spaces with a model reduction map (forget $F$),
\[
  p\colon [\Sigma\oftype \GRD, F\oftype\pt{\Sigma}] \to \GRD
  \text{.}
\]

If we have an internal GRD-system $\Sigma$ in a Giraud frame $\baseS X$, then we have an internal geometric theory for $\pt{\Sigma}$ and hence a classifying topos $\baseS X \pt{\Sigma}$ (using $\baseS X$ instead of $\baseS$). Let us for the moment (in Section~\ref{sec:TTNotn} we shall have better proposals) write $X\pt{\Sigma}$ for the corresponding space.
A point of it is a pair $(x,F)$ where $x\oftype X$ (so that we get a space over $X$), and $F\oftype x^{\ast}\Sigma$.
But then if $\Sigma$ is represented as a map $\Sigma\colon X\to \GRD$ then these are equivalent to structures $(x,\Sigma', F)$ with $x\oftype X$, $\Sigma'\oftype \GRD$, $F\oftype \pt{\Sigma'}$ and $\Sigma'\cong\Sigma(x)$.
These are the points of a (bi-)pullback:
\[
  \begin{tikzcd}
  	X\pt{\Sigma}
  	  \ar[r]
  	  \ar[d]
  	& {[\Sigma\oftype \GRD, F\oftype\pt{\Sigma}]}
  	  \ar[d, "p"]
  	\\
    X
      \ar[r, "\Sigma"']
    & {\GRD}
  \end{tikzcd}
\]
Thus $p$ is generic locale-presented-by-GRD-system.
That is almost a generic locale (or localic map), except that the GRD-system is not uniquely determined by the locale.

Similarly for sites (Section~\ref{sec:RelSites}), we have a theory $\thT_0$ of sites, and and an extended theory
$\thT_1 = (\catC,J)\oftype\site, X\oftype\pt{\catC,J}$ of a site equipped with a flat continuous functor.

Thus we have shown, for two standard modes of presenting geometric theories, that there is a corresponding theory extension such that every classifier is a bipullback of the model reduction map.

What we want to argue now is that \emph{every} theory extension can be understood as a mode of presentation of geometric theories.

Suppose $\thT_1$ is an extension of $\thT_0$, and $a$ is a model of $\thT_0$ in $\baseS W$. We want to define a theory $\pt {a}$ relative to $\baseS W$.
First, $\thT_1$ is a theory relative to $\baseS$ and hence also to $\baseS W$. Next, relative to $\baseS W$, we can also syntacticize $a$ in a theory $\mathbb{A}$, equivalent to the one-point theory.
We take the disjoint union of these theories, giving the product of spaces, and adjoin isomorphisms between the $\thT_0$-ingredients in $\thT_1$ and the corresponding constants in $\mathbb{A}$.
This is our $\pt {a}$. Its models are the models of $\thT_1$ for which the $\thT_0$ reduct is isomorphic to $a$.

Just as for GRD-systems, we have a bipullback diagram
\begin{equation} \label{eq:extnBipb}
  \begin{tikzcd}
  	{W[\pt{a}] = a^\ast[\thT_1]}
      \ar[r]
      \ar[d]
    & {[\thT_1]}
      \ar[d, "p"]
    \\
    W
      \ar[r, "a"']
    & {[\thT_0]}
  \end{tikzcd}
\end{equation} 

\section{Bundles}
\label{sec:Bundles}
There are various different kinds of \glemph{bundle} defined in traditional topology, emphasizing different aspects.
However, what they have in common is the idea that a family of \emph{fibres,} indexed by points of a \emph{base space,} are ``bundled together'' to form a \emph{bundle space} (or \emph{total space}), with a map down to the base space, such that the inverse image of each point is isomorphic to the corresponding fibre.

Conceptually then, as in tangent bundles for example, a bundle $p\colon Y\to X$ is just a map, but thought of in an inverse way: for each point $x\oftype X$, we define a space, the fibre $p^{-1}(x)$ over $x$.

Point-set, there is still a mess to wade through if we are to get from the bundle view to the map $p$. We must form the point-set coproduct $\sum_{x\oftype X}p^{-1}(x)$, define an appropriate topology on it (\emph{not} in general the coproduct topology), and prove that that topology is correct on each fibre, and makes the obvious $p$ continuous.

The problem here is that we should like the fibre, somehow, to vary continuously with the base-point (think: tangent spaces).
Point-set, we have no notion of continuity for an assignment of spaces to base-points, as we do not have a point-set topological space of all spaces.
This is partly because the ``set'' of points (all spaces) would be large, but also because we need the concept of generalized spaces.

Point-free, with ``continuous'' just meaning ``geometric'', we do have a space of spaces as geometric theories, and we obtain a straightforward notion of bundle that leads directly from fibre assignment to bundle space.  

\begin{definition}
\label{def:bundle}
  A \emph{bundle} over a space $X$ is defined by a geometric construction of spaces $Y(x)$ (the fibres, presented as geometric theories) out of points $x\oftype X$.
\end{definition}

In fact, we have already seen a general discussion of this process in Section~\ref{sec:ExtnRel}. We can suppose that our description of the space is given by an extension of theories, $\thT_1$ extending $\thT_0$, with reduction map
$p\colon [\thT_1] \to [\thT_0]$.%
\footnote{
  It would suffice take those to correspond to site presentations, but it is often useful to use more particular instances of $p$, with particular properties -- such as GRD-systems for the localic case.
}
Then any \emph{point} $a$ of $[\thT_0]$ gives rise to a \emph{space} $\pt{a}$, that of $\thT_1$ models equipped with an isomorphism between the $\thT_0$-reduct and $a$.

Now look back at diagram~\eqref{eq:extnBipb}.
If $a$ is a global point, $W=1$, then the bipullback is equivalent to the fibre $p^{-1}(a)$ over point $a$.
Let us say, then, that for our generalized points, the (generalized) fibre is still given by the bipullback.
Then $p$ itself is the fibre over the generic point $\Id\colon [\thT_0]\to [\thT_0]$.

Rather than insisting on using ``$\pt{\cdots}$'' in every situation, we can assimilate the notation with Definition~\ref{def:bundle}.
This is because that definition gives us a theory extension, with $\thT_0$ a theory for $X$ and $\thT_1$ a theory for pairs $(x,y)$ with $x\oftype X$ and $y\oftype Y(x)$. 

\begin{example}
  Let us calculate the tangent bundle of the sphere $S^2$, treated as a subspace of $\Reals^3$: $S^2 = [x\oftype\Reals^3\mid x\cdot x = 1]$.
  Note that so far everything is geometric: $\Reals$, its arithmetic operations, the inner product on $\Reals^3$; and $S^2$ thus defined is clearly a subspace, as it is an equalizer.
  
  Now, for each $x\oftype S^2$, we can define its actual tangent plane $T_x S^2$ in $\Reals^3$ as
  $[y\oftype\Reals^3 \mid (y-x).x = 0]$.
  This is an affine subspace of $\Reals^3$, and comes equipped with a canonical zero point, namely $x$. Hence it is a (point-free topological) vector space.
  
  None of this should be at all surprising.
  The unexpected bit is that \emph{immediately} we have a bundle $p\colon T S^2=\sum_{x\oftype S^2} T_x S^2 \to S^2$:
  its existence follows from the geometricity of the construction of $T_x S^2$.
  In this case we have
  $T S^2 = [x,y\oftype \Reals^3 \mid x\cdot x = 1, (y-x)\cdot x = 0]$, and we know that this geometric description already describes the topology as well as the points.%
  \footnote{
    What we still don't have is a general point-free account of differential geometry to say why $S^2$ is a differential (let alone smooth) manifold and $T S^2$ its tangent bundle.
  }
  
  We shall say more about the notation $\sum_{x\oftype S^2}$ in Section~\ref{sec:TTNotn}, but it does \emph{not} denote a coproduct.
  The topology is plainly wrong for that. Neighbouring base points have neighbouring tangent planes, and we can see this as reflecting the fact that the indexing space $S^2$ is not discrete. 
\end{example}

\begin{remark}
  For AUs, the argument of Section~\ref{sec:ExtnRel} does not quite work, because we do not have the ability to syntacticize constants.
  Nonetheless, \cite{Vickers:AUSk} shows how an extension of arithmetic theories can be treated as a bundle for toposes, and in a base-free way. Over any elementary topos (with nno), any model of $\thT_0$ gives rise to a space.
\end{remark}

\subsection{Type-theoretic notation}
\label{sec:TTNotn}
We now seek ways to use the notation of dependent type theory to denote spaces.

In the bundle notation of Definition~\ref{def:bundle}, we have rewritten the extension $\thT_1$ to show how it comprises the $\thT_0$-reduct ($x$, say), together with some extra $Y(x)$ dependent on it.
We can express this in a notation of dependent type theory, as
\[
  x\oftype[\thT_0] \vdash Y(x) \text{ space}
\]

Type theory then suggests a dependent sum notation for $[\thT_1]$ as $\sum_{x\oftype[\thT_0]} Y(x)$, as its points are pairs $(x,y)$ where $x\oftype [\thT_0]$ and $y\oftype Y(x)$.
In this convenient notation we have the means to denote relativized theories.

The $x$ there is \glemph{generic}, but we can substitute any \glemph{specific} point. Suppose we are using $\thT_0$ and $\thT_1$ to define a bundle over $W$ as in diagram~\eqref{eq:extnBipb}, with a specific point $a\colon W \to [\thT_0]$, so we are saying
\[
  w\oftype W \vdash Y(a(w)) \text{ space}
\]

We can then write the fibre in a bipullback square
\begin{equation} \label{eq:DTTpb}
  \begin{tikzcd}
    {\sum_{w\oftype W}Y(a(w)) = a^\ast (\sum_{x\oftype [\thT_0]} Y(x))}
      \ar[r]
      \ar[d]
    & {\sum_{x\oftype [\thT_0]} Y(x)}
      \ar[d]
    \\
    W
      \ar[r, "a"']
    & {[\thT_0]}
  \end{tikzcd}
\end{equation}

Note that these sums, such as $\sum_{x\oftype [\thT_0]} Y(x)$, are \emph{not} coproducts in general.
The geometric construction of $Y(x)$ out of $x$,
which gives an internal space in $\baseS [\thT_0]$,
provides enough information for a topology across the fibres,
giving cohesion between neighbouring fibres.

Diagram~\eqref{eq:DTTpb} gives the essential reason why point-free topology can be thought of as a dependent type theory, because the top-left corner shows \emph{substitution} ($\sum_{w\oftype W}Y(a(w))$) as \emph{pullback} ($a^\ast (\sum_{x\oftype [\thT_0]} Y(x))$).
Nonetheless, there are various features that make it an unusual type theory.

The first is that function types and $\prod$-types can exist \emph{only in particular cases,} and must be argued case by case. They cannot be a general part of the logic, because the categories of spaces (generalized or ungeneralized) are not cartesian closed.
For locales, $X$ is exponentiable iff it is locally compact, and not all are.

Even where function types do exist, they may fail to behave as expected.
For example, if $X$ and $Y$ are discrete then the exponential $Y^X$ exists, but will not itself be discrete in general -- it has the point-open topology, equivalently the product topology for the $X$-fold product of copies of $Y$. Thus even the category of discrete spaces (sets) is not cartesian closed geometrically~\cite[3.1.1]{TopCat}.

In Section~\ref{sec:Exp} we explore some of the reasoning for these function types that do exist.
The resulting mathematics has some notable surprises.
For instance (Example~\ref{ex:0Power}),
suppose $\phi$ is a subset of 1 (\ie\ an open).
Then, allowing for the fact that $0^\phi$ is not discrete, we find that $0^{0^\phi}\cong\phi$ -- the double negation law of logic holds, geometrically!

Some other uses of exponentials, the \emph{bagspaces,} are mentioned in Section~\ref{sec:OpfibFib}.

A second departure from usual type theory is with identity types. This can reasonably be defined (for a space $X$) as
$[x,x'\oftype X, \phi\colon x \cong x']$.
However, the category of spaces is in fact a 2-category, so we also need to refer to an asymmetric identity type
$[x,x'\oftype X, \phi\colon x \to x']$.
This is incompatible with the usual type-theoretic notions of path transport, and for good reasons. We need to know whether path transport, if it even exists, goes covariantly or contravariantly, and Section~\ref{sec:OpfibFib} shows that all these phenomena can exist with spaces.

Finally, a third departure is with universes.
As we have seen, for any theory extension, of $\thT_0$ by $\thT_1$, the base $[\thT_0]$ can be understood as a universe in that its points can be understood as spaces.
Thus we get many different universes for different kinds of spaces.
On the other hand, we don't get a hierarchy of universes forced on us by Russell's paradox or the like, as Russell's paradox cannot be expressed geometrically.
Thus any notion of universe is very different from that of, say, Homotopy Type Theory.

\subsection{Opfibrations and fibrations}
\label{sec:OpfibFib}

We briefly discuss two examples of bundles that have interesting interactions with the 2-cells for spaces.
These are just two simple examples of much broader phenomena~\cite[B4.4]{Elephant1}.

The first is that of discrete spaces, using the \emph{object classifier} $\Set$, for the theory with one sort and nothing else. Its models are \glemph{sets}.
Given a set $X$, the points of the corresponding discrete space are just the elements of $X$, so we may write $X$ also for the space. 
Our bundle space is then $[X\oftype\Set, x\oftype X]$, or $\sum_{X\oftype\Set} X$.
Using~\cite{JoyalTier}, the bundle map is a local homeomorphism, and in fact local homeomorphisms are exactly its bipullbacks.

What we want to remark on here is how the bundle interacts with 2-cells.
Suppose we have two sets $X$ and $X'$, and a function $f\colon X\to X'$. By the 2-categorical aspects of the definition of classifying topos, this is a 2-cell between two maps $X$ and $X'$ into $\Set$.
Clearly this gives a map, covariantly, between the corresponding discrete spaces, in other words the fibres over $X$ and $X'$.
This works out as saying the bundle is an \emph{opfibration} in the 2-category of spaces.

Interestingly, the opfibration property does not depend on a base $\baseS$. \cite[B4.4]{Elephant1} proves it for the whole 2-category of elementary toposes.
This is a much stronger result than simply saying it for $\baseS$-toposes, as the 2-cells there have to be the identity over $\baseS$ and so there are far fewer of them.
Thus for an object $X$ of $\baseS$ there is still non-trivial content in saying that the map $X\to 1$ is an opfibration, even though $1$ has no non-trivial endomorphisms.
\cite{Vic:FibCBFT} shows how to prove this in a relatively simple way, exploiting the base-free nature of AU arguments.

Our second example is that of spectral spaces, of the form $\Spec(L)$ for a distributive lattice $L$.
Its points are the prime filters of $L$, and these can be axiomatized geometrically.
Evidently we get a theory extension (distributive lattice extended with prime filter) and hence a bundle.
The bundle space can be written
$[L\oftype\DL, y\oftype \Spec(L)]$ or $\sum_{L\oftype\DL} \Spec L$.

This time, the bundle is a fibration.
If we have a distributive lattice homomorphism $f\colon L\to L'$, then we have a map \emph{contravariantly} between the spectra, $\Spec(f)\colon \Spec(L')\to \Spec(L)$.

Note that the general GRD bundle for localic maps is neither fibration nor opfibration.

An important use of these considerations is in the theory of \emph{bagspaces,} spaces of ``bags'', indexed families, of points of some $Z$.
(See~\cite[B4.4]{Elephant1}, where, following~\cite{GeoThDB}, they are called ``bagdomains''.)
The possible indexing spaces will be described by some extension
$x\oftype X \vdash I(x) \text{ space}$,
and then the bagspace is $\sum_{x\oftype X} \prod_{i\oftype I(x)} Z$ (or $\sum_{x\oftype X} Z^{I(x)}$).
Categorically, this is a \emph{partial product} of $Z$ against the extension map $\sum_{x\oftype X} I(x) \to X$.
Whether these exist depends on the nature of the extension, and for a proper 2-categorical account the discussion depends on it being a fibration or an opfibration.
They do exist for indexation both by sets and by spectral spaces, as well as for many much more general examples.

See also \cite{SVW:GelfandSGTGM}, for a discussion of how the distinction between fibrational and opfibrational led to an otherwise puzzling difference between two topos approaches to quantum physics.

\subsection{Continuous flat functors, type theoretically}
\label{sec:CtsFlatTT}

In Section~\ref{sec:RelSites} we discussed the internal notion of functor, and how that could be represented directly using an ability to have pullbacks as a sort constructor in geometric theories.
We made out that this was advance on the previous notation of Section~\ref{sec:Sites}, which was more directly in terms of a functor.
However, in practice the external functor notation is a more natural way to think about the structures, and what we do now is to show that type theoretic notation can be used to provide this even for internal functors.

What we do relies on the fact that any discrete space $X$ is exponentiable, so the exponential $\Set^X = \prod_{x\oftype X} \Set$ exists. (More generically, this comes down to the bagspaces of Section~\ref{sec:OpfibFib}, with $X$ as indexing set.)
In fact $\Set^X$ is equivalent to $[Y\oftype \Set, p\colon Y\to X]$.
This gives exactly the bundle view of the map $p$.

We shall use the notation of dependent types with this, though note that this is a dependent type theory for the internal mathematics of Giraud frames, and not for the mathematics of spaces.
If we have $(Y(x))_{x\oftype X}\oftype \prod_{x\oftype X}\Set$,
then the bundle set $Y$ is $\sum_{x\oftype X}Y(x)$,
a coproduct in the internal sense using indexed categories.
The symbol $x$ itself, bound here by $\sum$, is generic. It may be instantiated by any generalized element of $X$,
giving a pullback.

We can now describe the theory of categories in a manner familiar to type theorists, using the following symbols and axioms.
\begin{itemize}
\item
  $\obj\catC\oftype\Set$. We shall write $i, j$, etc. for typical elements.
\item
  $\catC=(\catC\homsp{i}{j})_{i,j\oftype \obj\catC}\oftype\prod_{i,j\oftype\obj\catC} \Set$, a type that we know to be geometric.
  Note how we have given ourselves some flexibility in notation.%
  \footnote{
    The particular notation $\catC\homsp{i}{j}$ for homsets is experimental and derived from tensor notation in physics.
    Here it is based on the fact that $\catC$ is a $\catC$-$\catC$-bimodule (profunctor).
    Superscripts show left, presheaf action, contravariant, while subscripts show right, diagram action, covariant.
    (Composition is diagrammatic order.)
  }
  The type of morphisms is then $\sum_{i,j\oftype\obj\catC} \catC\homsp{i}{j}$.
\item
  A family of constants $\Id_i\oftype\catC\homsp{i}{i}$.
  What this means is that we have two points of the space $\prod_{i\oftype\obj\catC} \Set$, namely $(1)_{i\oftype\obj\catC}$ and $(\catC\homsp{i}{i})_{i\oftype\obj\catC}$,
  and $i\mapsto \Id_i$ is a specialization morphism between them.
  We shall commonly omit the subscript in $\Id_i$.
\item
  A family of functions $\cdot_{ijk}\colon \catC\homsp{i}{j} \times \catC\homsp{j}{k} \to \catC\homsp{i}{k}$, written $(u,v)\mapsto uv$.
  (Note our preference for diagrammatic order of composition.)
\item
  Axioms $\Id u = u = u\Id$ and $(uv)w = u(vw)$.
  These really represent equations between specialization morphisms between points of $\prod$-types in an obvious way.
\end{itemize}

An internal diagram $X$ over $\catC$ can be described in a similar style.
It is a bundle $(X_i)_{i\oftype\obj\catC}\oftype \prod_{i\oftype\obj\catC}\Set$,
equipped with actions $X_i \times \catC\homsp{i}{j} \to X_j$,
written $(x,u)\mapsto xu$,
satisfying the axioms $x\Id = x$ and $x(uv)=(xu)v$.

It is flat if the following maps are all epi.
\begin{gather*}
	\begin{split}
	  & \sum_{k}X_k \to 1
	\\
	  & \sum_{k}
		X_k\times\catC\homsp{k}{i}\times \catC\homsp{k}{j} 
		\to X_i \times X_j
		\quad \left((x,u,v) \mapsto (xu, xv)\right)
	\\
	  & \sum_{k}
		\left[(x',u,v,w)\oftype X_k \times \catC\homsp{k}{i} \times\catC\homsp{i}{j} \times
		\catC\homsp{i}{j} 
		\mid uv=uw \right]
	\\
	  & \quad\quad\quad \to \left[(x,v,w)\oftype X_i \times \catC\homsp{i}{j} \times \catC\homsp{i}{j}\mid xv=xw\right]
	  \quad
		\left((x',u,v,w) \mapsto (x'u,v,w)\right)
	\end{split}
\end{gather*}

We leave to the reader the exercise of extending this to coverages and continuous flat functors.

\section{Some exponentials that exist} \label{sec:Exp}

The type theory of spaces cannot include arrow types (or dependent product types more generally), as few categories of point-free spaces are cartesian closed.
In $\Loc$, a locale $X$ is exponentiable iff it is locally compact -- that is to say, by Hyland's Theorem, its frame $\Omega X$ is a continuous lattice.

For example $\Nat$ is locally compact, as is any set (discrete space), so the exponential $\Nat^{\Nat}$ (Baire space) exists, but is not itself locally compact.
In particular we see that $\Nat^{\Nat}$ exists \emph{but is not a set:}
the Baire space topology is not discrete.
Moreover, higher-order types such as $\Nat^{\Nat^{\Nat}}$ do not exist.

If $X$ and $Y$ are arbitrary sets, then the exponential $Y^X$ has for its points the functional (total and single-valued) relations from $X$ to $Y$, and these can be axiomatized geometrically.

The crucial point to recognize is that this exponential $Y^X$, with its normally non-discrete topology, is \emph{not} the exponential as calculated in a Giraud frame where $X$ and $Y$ live -- to get that one has to take the non-geometric \glemph{discrete coreflection}.
Hence the mathematics of the spatial exponentials is different from the familiar mathematics of elementary toposes.
This shows up particularly with higher-order types,
in special cases where $Y^X$ is also exponentiable.
Then its non-discrete topology shows that, even classically, the spatial $Z^{Y^X}$ is qualitatively different from the ``set-theoretic'' one in that it has fewer points, because it admits only \emph{continuous} maps from $Y^X$ to $Z$.

In this section we explore some surprising features of this reinterpreted exponential.
A simple example (Example~\ref{ex:2Power}) is that $2^{2^X}$ is the free Boolean algebra on $X$. This is smaller than expected classically, and follows once one understands that $2^X$ has the product topology of the $X$-fold product of copies of 2 -- it is a \glemph{Stone space}, analogous to Cantor space $2^\Nat$.
Some examples are quite shocking to a constructive mathematician. For instance (Example~\ref{ex:0Power}), logically the double negation law $\neg\neg\phi = \phi$ holds, once one reinterprets the Heyting negation $\neg\phi$ as an exponential $0^\phi$ (as is usual in type theory).

Since our main aim is to illustrate the surprising behaviours, we shall be content with frame-based proofs.
Nonetheless, it is to be hoped that fully geometric proofs can be found.
\subsection{Two results on discrete and Stone spaces}
\label{sec:TwoResultsDS}

\begin{proposition} \label{prop:SPowerD}
  Let $X$ be discrete and $Y=\Spec B$ \glemph{Stone}. Then $Y^X$ is Stone.
\end{proposition}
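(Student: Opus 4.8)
The plan is to identify $Y^X$ explicitly as the spectrum of a single Boolean algebra, thereby exhibiting it as Stone, and to do the identification purely through frame homomorphisms (a frame-based proof, as this section permits). First I would reduce the exponential to a product. Since $X$ is discrete it is locally compact, hence exponentiable; and a map $W\to Y^X$ is the same as a map $W\times X\to Y$. Because products distribute over coproducts in $\Loc$ and $X=\coprod_x 1$, we have $W\times X=\coprod_x W$, so such a map is exactly an $X$-indexed family of maps $W\to Y$. Thus $Y^X\cong\prod_{x\in X}Y$, the $X$-fold power in $\Loc$ with its point-open (product) topology.

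Next I would propose the candidate Boolean algebra. Let $C=\coprod_{x\in X}B$ be the coproduct of $X$ copies of $B$ in the category $\mathbf{BA}$ of Boolean algebras (which exists, $\mathbf{BA}$ being algebraic hence cocomplete), with injections $\iota_x\colon B\to C$. I claim $Y^X\cong\Spec C$. Rather than computing the frame of the infinite product directly --- which is the genuinely delicate point in point-free topology --- I would verify the universal property at the level of frames. By the glossary entry for Stone spaces, $\Omega(\Spec C)=\Idl C$, and the ideal completion is the free frame on the distributive lattice $C$; hence for any locale $W$,
\[
\Loc(W,\Spec C)=\mathbf{Frm}(\Idl C,\Omega W)=\mathbf{DLat}(C,\Omega W)\text{.}
\]

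The key algebraic step uses Booleanness. Any distributive-lattice homomorphism out of a Boolean algebra sends each $a$ (together with its complement) to a complemented element and automatically preserves complements, so it corestricts to a Boolean homomorphism into the Boolean algebra $\mathrm{Comp}\,\Omega W$ of complemented elements of $\Omega W$; conversely every such Boolean homomorphism gives a lattice map into $\Omega W$. Thus $\mathbf{DLat}(C,\Omega W)=\mathbf{BA}(C,\mathrm{Comp}\,\Omega W)$, and the coproduct property of $C$ then yields
\[
\mathbf{BA}(C,\mathrm{Comp}\,\Omega W)=\prod_x\mathbf{BA}(B,\mathrm{Comp}\,\Omega W)=\prod_x\mathbf{DLat}(B,\Omega W)=\prod_x\Loc(W,\Spec B)\text{.}
\]
This is naturally isomorphic to $\Loc(W,\prod_x Y)=\Loc(W,Y^X)$, so by Yoneda $Y^X\cong\Spec C$, which is Stone.

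I expect the main obstacle to be precisely the step where Booleanness routes distributive-lattice homomorphisms through the complemented elements: this is what collapses the infinite product to the spectrum of a single coproduct Boolean algebra, and it is exactly where the argument would break for a merely coherent (non-Stone) $Y$, since then $B$ is only a distributive lattice and the corestriction to complemented elements is unavailable. A secondary point to watch is the constructive validity, in the ambient topos, of ``the complemented elements form a Boolean algebra'' and of the free-frame/ideal-completion adjunction; but as the section asks only for frame-based proofs, these standard facts may be invoked rather than re-derived geometrically.
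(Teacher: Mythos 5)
Your proof is correct, and it lands on exactly the algebra the paper does, but by a noticeably different route. The paper argues pointwise: a (generalized) point of $Y^X$ is an $X$-indexed family of prime filters of $B$, i.e.\ a relation $\theta$ from $X$ to $B$ with prime-filter slices; the key Boolean step is that any such $\theta$ is automatically decidable ($(x,b)\notin\theta$ iff $x\mathrel{\theta}\neg b$), so the prime-filter conditions can be written as \emph{coherent} axioms, and the points are therefore geometrically in bijection with the prime filters of ``the Boolean algebra generated by $X\times B$ subject to the prime-filter conditions'' --- which is precisely a presentation of your coproduct $C=\coprod_{x\in X}B$ in $\mathbf{BA}$. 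You instead argue representably: Yoneda plus the adjunction chain $\mathbf{Frm}(\Idl C,\Omega W)\cong\mathbf{DLat}(C,\Omega W)\cong\mathbf{BA}(C,\mathrm{Comp}\,\Omega W)$, with Booleanness entering as the factorization of distributive-lattice maps through complemented elements --- the same Boolean fact as the paper's decidability observation, in dual form (take $L=\Omega W$, whose complemented part plays the role of $2$). Your version turns the paper's opening remark (``a variant of Tychonoff: products of Stone spaces are Stone'') into the actual structure of the proof, via $Y^X\cong\prod_{x\in X}Y$, and identifies $C$ by a universal property rather than by generators and relations, which makes functoriality in $B$ transparent; the paper's version stays closer to its pointwise programme and states the conclusion as a geometric bijection of points. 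Since your hom-bijections are natural in $W$, your conclusion $Y^X\cong\Spec C$ is an isomorphism of spaces, so it is just as strong; and the section explicitly licenses frame-based proofs, so your appeals to the $\Idl$ adjunction and to cocompleteness of $\mathbf{BA}$ (available in a topos with nno, $\mathbf{BA}$ being finitary algebraic) are in order. Your closing diagnosis of where Booleanness is indispensable --- and why the argument breaks for merely coherent $Y$ --- is also exactly right.
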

\begin{proof}
  There seems to be a variant of Tychonoff's Theorem at work here: any product of Stone spaces is Stone.
  
  We shall calculate more explicitly.
  A map from $X$ to $\Spec B$ is an $X$-indexed family of prime filters of $B$, hance a relation $\theta$ from $X$ to $B$ such that, for each $x\in X$, the set $(x,-)=\{b\in B\mid x\mathrel{\theta} b\}$ is a prime filter.
  But any such relation is decidable, with $(x,b)\notin\theta$ iff $x\mathrel{\theta}\neg b$,
  and so corresponds to a map from $Y\times B$ to 2; and the conditions for $(x,-)$ to be a prime filter can be expressed coherently (no infinite disjunctions).
  It follows that maps from $X$ to $\Spec B$ are geometrically in bijection with prime filters of the Boolean algebra generated by $X\times B$, subject to axioms expressing the prime filter conditions.
\end{proof}

\begin{proposition} \label{prop:DPowerS}
  Let $X = \Spec A$ be Stone and $Y$ discrete. Then $Y^X$ is discrete.
\end{proposition}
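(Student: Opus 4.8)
The plan is to describe the points of $Y^X$ explicitly and show they form a set, mirroring the treatment of Proposition~\ref{prop:SPowerD}. A point of $Y^X$ is a map $X\to Y$, i.e.\ a (generalized) continuous map from the Stone space $\Spec A$ to the discrete space $Y$. Since $X=\Spec A$ is compact and zero-dimensional, with $\Clop(\Spec A)\cong A$, such a map should be locally constant with Kuratowski-finite image, hence correspond to a finite partition of unity in $A$ labelled by elements of $Y$: a Kuratowski-finite set of pairs $(y,a)\in Y\times A$ with the $y$'s distinct, the $a$'s nonzero and pairwise disjoint, and $\unjoin a=1$. First I would make this correspondence precise. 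Given $f\colon\Spec A\to Y$, the clopens $f^{-1}(y)$ are the labelled pieces; conversely a finite labelled partition reassembles into a map, because $Y=\coprod_y 1$ and the pieces are complemented opens, so the decomposition is admissible.

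Next I would observe that the set $S$ of such finite labelled partitions is built from $A$ and $Y$ by a purely geometric, indeed finitary (coherent), construction: it is cut out of $\fin(Y\times A)$ by the disjointness, covering and distinctness conditions, all of which use only finite meets, finite joins, and the decidable equalities of $A$ and $Y$, with no infinite disjunctions. Hence $S$ is an honest set, and the theory of points of $Y^X$ is just the theory of elements of $S$; that is, $Y^X$ is the discrete space on $S$.

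To obtain this at the level of spaces rather than merely for global points, I would run the argument at an arbitrary stage. A generalized point of $Y^X$ at stage $p\colon W\to\baseS$ is, by the exponential adjunction, a map $W\times X\to Y$; the $Y$-indexed fibres give a clopen partition of $W\times X$, and restricting to each slice $\{w\}\times X\cong\Spec A$ the compactness of $X$ forces this partition to have finite support fibrewise over $W$. This is precisely an element of $p^{\ast}S$, the set $S$ rebuilt in $\baseS W$; since the construction of $S$ is geometric it is preserved by $p^{\ast}$, and the resulting bijection is natural in $W$. Thus $Y^X$ and the discrete space $S$ have naturally isomorphic points at every stage, so $Y^X\cong S$ is discrete. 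Alternatively, in the frame-based spirit of this section, one checks directly that $\Omega(Y^X)\cong\power S$, the frame of a discrete space.

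The hard part will be the compactness step: showing geometrically that the $Y$-indexed clopen partition of $X$ (or of each fibre of $W\times X\to W$) really has Kuratowski-finite support, uniformly over the base. This is where the Stone hypothesis is essential --- it is the point-free incarnation of ``a compact space meets only finitely many members of a disjoint clopen family'' --- and where one must be careful that the finiteness is produced by a genuine geometric construction, not merely classically. Everything downstream of finiteness is coherent bookkeeping.
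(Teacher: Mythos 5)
Your overall strategy --- encode a map $X\to Y$ as finite labelled data in $\fin(Y\times A)$, check that the conditions are geometric, and conclude that the points of $Y^X$ are the elements of a set --- is the same as the paper's. But your chosen normal form hides a genuine constructive gap. You ask for a Kuratowski-finite set of pairs $(y,a)\in Y\times A$ with \emph{the $y$'s distinct}, the $a$'s nonzero and pairwise disjoint, and join $1$. Here $Y$ is merely discrete, i.e.\ an arbitrary set (object of a Giraud frame): it need not have decidable equality. Consequently: (a) distinctness, $y\neq y'$, is not a geometric formula, so your set $S$ is not cut out geometrically from $\fin(Y\times A)$; (b) the singleton $\{y\}$ need not be a complemented subset of $Y$, so $f^{-1}(y)$ is open but need not be clopen, hence need not correspond to any element of $A$ --- the correspondence you promise to ``make precise'' in your first step does not exist in general; and (c) the image of $f$ is contained in a Kuratowski-finite subset of $Y$ but need not itself be Kuratowski finite. (Your normal form does exist when $Y$ is in addition Stone, i.e.\ Kuratowski finite with decidable equality; that is precisely the content of Proposition~\ref{prop:DSPowerS}, and is why the canonical form appears only there.) Incidentally, the compactness step you single out as the hard part is, in the frame formulation, the easy part: with $\Omega X=\Idl A$, covering means $1$ lies in a join of ideals, which immediately yields a finite witness.

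The paper's proof avoids canonical representatives altogether. A map is encoded as a relation $\theta\subseteq Y\times A$ in which each $(y,-)$ is an ideal, and the disjointness-of-labels requirement is stated positively as a geometric disjunction: if $y\theta a$ and $y'\theta a'$ then $a\wedge a'=0$ \emph{or} $y=y'$. Compactness gives a finite sub-relation $\theta_0\in\fin\theta$ with $\unjoin_{y\theta_0 a}a=1$, and the key lemma is that $\theta$ is recovered from any such witness as a closure $\overline{\theta}_0$. Since different witnesses determine the same map, the paper then \emph{quotients} the geometrically defined set of admissible finite witnesses by the geometric equivalence relation $\theta_0\equiv\phi_0$ iff $\theta_0\subseteq\overline{\phi}_0$, and identifies $Y^X$ with the discrete space on that quotient. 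Replacing your canonical form by this ``redundant finite data modulo a geometric equivalence relation'' is exactly the repair your argument needs; once that is done, the stage-wise naturality part of your plan goes through essentially as you describe.
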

\begin{proof}
  $\Omega Y = \power Y$ is the frame generated by $Y$ subject to axioms as in Proposition~\ref{prop:setToSort},
  and $\Omega X = \Idl A$.
  It follows that maps from $X$ to $Y$ are equivalent to subsets $\theta\subseteq Y\times A$ such that
  \begin{enumerate}
  \item
    if $y\theta a \geq a'$ then $y\theta a'$,
  \item
    $y\theta 0$,
  \item
    if $y\theta a$ and $y\theta a'$ then $y\theta (a\vee a')$,
  \item
    if $y\theta a$ and $y'\theta a'$ then $a\wedge a'=0$ or $y=y'$, and
  \item
    there is some $\theta_0\in\fin\theta$
    with $\unjoin_{y\theta_0 a} a = 1$.
  \end{enumerate}
  Let us (temporarily) call such a $\theta$ a \emph{map relation}.
  
  Now suppose $\phi_0 \in \fin(Y\times A)$ has
  $\unjoin_{y\phi_0 a} a = 1$, and also satisfies condition (4) with $\phi_0$ substituted for $\theta$.
  Define $y \overline{\phi}_0 a$ if there is some $I\in\fin\phi_0$ such that $y=z$ for all $(z,b)\in I$ and $a\leq \unjoin_{(z,b)\in I} b$.
  Then $\overline{\phi}_0$ is a map relation.
  To prove condition (4), suppose we have $y\overline{\phi}_0 a$ and $y'\overline{\phi}_0 a'$, with corresponding $I$ and $I'$.
  Then we have
  $\Forall {(z,b)\in I} \Forall {(z',b')\in I'} (z=z' \vee (b\wedge b')=0)$.
  By properties of Kuratowski finite sets, it follows that
  $\Forall {(z,b)\in I} \left( (\Exists {(z',b')\in I'} z=z') \vee (\Forall{(z',b')\in I'} b\wedge b'=0)\right)$,
  and then that
  $\Exists {(z,b)\in I} \Exists {(z',b')\in I'} z=z' \vee \Forall{(z,b)\in I}\Forall{(z',b')\in I'} b\wedge b'=0$.
  In the first case we have $y=z=z'=y'$, and in the second $a\wedge a'=0$.
  
  We claim that, for any map relation $\theta$, if $\phi_0\subseteq\theta$, then $\theta = \overline{\phi}_0$.
  The $\subseteq$ direction is clear.
  For the reverse, suppose $y\theta a$.
  We have $a = \unjoin_{y' \phi_0 a'} (a\wedge a')$.
  For each $y' \phi_0 a'$ we have $y' \theta a'$, and hence either $y=y'$ or $a\wedge a'=0$ so we can find a partition $\phi_0 = I \cup J$ such that $y=y'$ for every $(y',a')\in I$, and $a\wedge a' =0$ for every $(y',a')\in J$.
  It follows that
  \[
    a = \unjoin_{y' \phi_0 a'} (a\wedge a') = \unjoin_{(y,a')\in I} (a\wedge a') \leq \unjoin_{(y,a')\in I} a'
    \text{,}
  \]
  and so $y\overline{\phi}_0 a$.
  
  We deduce that, if $\theta$ is a map relation, equipped with $\theta_0$ as in condition (5), then $\theta = \overline{\theta}_0$ and so is determined by $\theta_0$.
  
  The required properties of $\theta_0$ (stated in (5) and inherited from (4)) are geometric formulae, and hence define a subset of $\fin(Y\times A)$.
  However, different $\theta_0$s may define the same $\overline{\theta}_0$, so we must quotient out.
  By our above claim, the appropriate equivalence relation is $\theta_0 \equiv \phi_0$ if $\theta_0 \subseteq \overline{\phi}_0$, and this is a geometric formula.
  
  Thus the points of $Y^X$ are geometrically equivalent to the elements of a set, and so $Y^X$ is discrete. 
\end{proof}

\subsection{Discrete Stone spaces} \label{sec:DiscStone}

From Propositions~\ref{prop:SPowerD} and~\ref{prop:DPowerS},
we see that if $Y$ is both discrete and Stone, then the transformation $X\mapsto Y^X$ alternates between discrete and Stone -- and thus preserves local compactness in those two cases.

\begin{proposition} \label{prop:DiscStone}
  A set $Y$ is a Stone space iff it is (Kuratowski) finite and has decidable equality.
\end{proposition}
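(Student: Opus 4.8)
The plan is to prove the two implications separately, in each case comparing the frame $\power Y$ of the discrete space $Y$ with the frame $\Idl B$ of a Stone space $\Spec B$. Recall from the glossary entry on Stone spaces that $\Omega(\Spec B)=\Idl B$ and that $B$ is recovered as $\Clop(\Spec B)$; while for discrete $Y$ we have $\Omega Y=\power Y$, whose complemented elements are exactly the decidable subsets of $Y$. Thus ``$Y$ is Stone'' will amount to the frame $\power Y$ being the ideal completion of its sublattice $B=\Clop Y$ of decidable subsets, with $Y\cong\Spec B$ on points. As the section preamble promises, I would be content with these frame-based arguments.

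For the ($\Leftarrow$) direction, suppose $Y$ is Kuratowski finite with decidable equality. First I would note that these two hypotheses together make $Y$ cardinal finite (in bijection with a standard finite set), so that every subset is decidable and $\power Y$ is a finite Boolean algebra. Then $Y\cong\Spec(\power Y)$: the points of $\Spec(\power Y)$ are the ultrafilters of the finite Boolean algebra $\power Y$, which are principal at the atoms, \ie\ the singletons, \ie\ the elements of $Y$; and since a finite lattice has only principal ideals, $\Idl(\power Y)=\power Y=\Omega Y$, so the topologies agree. Hence $Y$ is Stone.

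For the ($\Rightarrow$) direction, suppose $Y$ is discrete and Stone, say $Y\cong\Spec B$, and I would extract the two conclusions from two separate features. \emph{Finiteness:} every Stone space is compact, since a cover of $\Spec B$ by basic clopens corresponds to a subset $S\subseteq B$ whose join is $1$ in $\Idl B$, so $1$ lies in the ideal generated by $S$ and hence $1\leq\bigvee S_0$ for some $S_0\in\fin S$. A compact discrete space is then Kuratowski finite: apply compactness to the cover $\{\,\{y\}\mid y\in Y\,\}$ to get a finite list with $Y=\{y_1,\dots,y_n\}$. \emph{Decidable equality:} because $\Omega Y=\Idl B$, every open is the join of the clopens below it, so each singleton satisfies $\{y\}=\bigvee\{\,d\in B\mid d\subseteq\{y\}\,\}$. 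Each decidable $d\subseteq\{y\}$ is, decidably, either $\emptyset$ or $\{y\}$ (decide whether $y\in d$); and since $y\in\{y\}$ while the join in $\power Y$ is union, some $d$ in the family contains $y$, forcing $d=\{y\}$. Thus $\{y\}$ is itself decidable, and decidability of membership in every singleton is precisely decidable equality on $Y$.

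The main obstacle I anticipate is the decidable-equality half of the forward direction: it is the one genuinely constructive step, and the whole force of ``Stone'' (as opposed to merely discrete, or discrete-and-compact, or discrete-and-Hausdorff) is used there. Discreteness alone makes singletons open but not complemented, and Hausdorffness would only give $\neg\neg$-stable equality; it is the \emph{basis of clopens} supplied by $\Idl B$ that upgrades each open singleton to a decidable subset. I would take care to state $\{y\}=\bigvee\{\,d\in B\mid d\subseteq\{y\}\,\}$ as an identity of subsets under the frame isomorphism $\Idl B\cong\power Y$, so that ``$y$ lies in the join'' genuinely means ``$y$ lies in one of the $d$''. The remaining ingredients---compactness of $\Spec B$, and the passage from Kuratowski finite with decidable equality to a finite Boolean powerset---are standard, and I would treat them briefly.
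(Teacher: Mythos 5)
Your ($\Rightarrow$) direction is sound, and partly different from the paper's: finiteness-from-compactness via the singleton cover is exactly the paper's argument, while for decidable equality you use the clopen basis of $\Spec B$ (the open singleton $\{y\}$ is a union of clopens, one of which must contain $y$ and hence equal $\{y\}$) where the paper instead appeals to regularity, so that the diagonal is closed as well as open, hence complemented. Both are constructively valid. The genuine gap is in your ($\Leftarrow$) direction, in the very first step: the claim that Kuratowski finiteness plus decidable equality makes $Y$ ``cardinal finite (in bijection with a standard finite set)'' is false in the setting of this paper, where the base is an arbitrary elementary topos and all reasoning must be constructive. The paper's own Remark immediately following this proposition gives the counterexample: the twisted double cover of $S^1$, viewed as a set in the sheaves over the circle, is Kuratowski finite with decidable equality and has cardinality 2, yet is not isomorphic to $2$; an enumeration of its elements exists only locally. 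Moreover, even if $Y$ were a standard finite set, your next step also fails constructively: it is not true that every subset of a finite set is decidable --- already $\power 1=\Omega$ is a Boolean algebra precisely when excluded middle holds --- so $\power Y$ is not the ``finite Boolean algebra'' your argument needs, and $Y\cong\Spec(\power Y)$ cannot be recovered this way.

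The repair is to replace the full powerset by the Kuratowski finite powerset $\fin Y$ (the free semilattice under $\cup$), which is what the paper does: finiteness of $Y$ gives $\fin Y$ a top element, decidable equality gives it binary intersections (and relative complements), so $\fin Y$ is a Boolean algebra, and the elements of $Y$ are geometrically in bijection with its prime filters, whence $Y=\Spec(\fin Y)$. Note also that $\fin Y$ is a geometric construction while $\power Y$ is not, so beyond mere correctness this substitution is what keeps the proposition usable in the geometric framework the paper is building.
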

\begin{proof}
  $\Rightarrow$:
  Finiteness follows from compactness. $Y$ is covered by the singletons $\{y\}$, and hence by a finite subset of them.
  Equality is decidable because $Y$ is regular, so that equality is closed.
  
  $\Leftarrow$:
  Finiteness means that $\fin Y$, the free semilattice (under $\cup$) over $Y$, has a top element, and decidable equality ensures that it also has binary intersections.
  In this case it is also a Boolean algebra.
  Then the elements of $Y$ are geometrically in bijection with the prime filters of $\fin Y$, so $Y = \Spec(\fin Y)$ is Stone.
\end{proof}

\begin{remark}
  Suppose $Y$ is discrete Stone.
  By finiteness it is $\{y_1,\ldots y_n\}$ for some $n\in\Nat$, and by decidable equality the $y_i$s can be chosen distinct.
  Then $n$ is unique, and provides a well defined finite cardinality for $Y$.
  
  However, $Y$ is not necessarily isomorphic to $\{1,\ldots n\}$.
  This is because our ability to choose an order for the distinct elements $y_i$ is only local.
  An easy counterexample is the twisted double cover $S^1\to S^1$ of the circle. It has cardinality 2 (every fibre has two elements), but it is not globally isomorphic to 2.
  For further discussion of the geometric issues here, see~\cite{SFP}.
\end{remark}

\begin{proposition} \label{prop:DSPowerD}
  Let $Y$ be discrete Stone, and $X$ discrete.
  Then $Y^X$ is Stone, and its Boolean algebra $B$ of clopens is generated by $X\times Y$, subject to the following relations for each $x\in X$:
  \begin{gather*}
      1 \leq \unjoin_{y\in Y} (x,y) \\
      (x,y)\wedge (x,y') \leq 0 \quad \text{(if $y\neq y'$)}
  \end{gather*}
\end{proposition}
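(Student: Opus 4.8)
The plan is to reduce to Proposition~\ref{prop:SPowerD} by presenting $Y$ itself as a spectrum. By Proposition~\ref{prop:DiscStone}, $Y$ is Kuratowski finite with decidable equality, and $Y = \Spec(\fin Y)$; write $B_0 = \fin Y$, which in this case is a Boolean algebra. Proposition~\ref{prop:SPowerD} then applies directly with $B = B_0$: it gives that $Y^X = (\Spec B_0)^X$ is Stone, and that its Boolean algebra $\Clop(Y^X)$ of clopens is the one generated by $X \times B_0$ subject to the conditions forcing each $b \mapsto (x,b)$ to be a prime filter of $B_0$. Since a prime filter of a Boolean algebra is the same thing as a homomorphism to $2$, these conditions say exactly that each $b \mapsto (x,b)$ is a Boolean homomorphism $B_0 \to \Clop(Y^X)$; equivalently, $\Clop(Y^X)$ is the $X$-fold coproduct $\coprod_{x \in X} B_0$ in the category of Boolean algebras.

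Next I would re-present the summand $B_0 = \fin Y$ on the smaller generating set $Y$. As $Y$ is finite with decidable equality, $\fin Y = \power Y$ is atomic with atoms the singletons $\{y\}$, and I claim it is presented, as a Boolean algebra, by generators $g_y$ ($y \in Y$) subject to $g_y \wedge g_{y'} = 0$ for $y \neq y'$ together with $\unjoin_{y \in Y} g_y = 1$. Decidable equality both makes $\fin Y$ genuinely Boolean and lets us index the disjointness relations by the (decidable) set of pairs $y \neq y'$; finiteness makes the covering relation $\unjoin_{y \in Y} g_y = 1$ a legitimate finite join, hence a coherent relation. The content to check is the standard fact that the free Boolean algebra on a finite partition of the top unit is the powerset of the indexing set; this has to be verified geometrically, and some care is warranted because $Y$ need not be globally of the form $\{1,\ldots,n\}$ (cf.\ the Remark after Proposition~\ref{prop:DiscStone}). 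The point is that the presentation is phrased in the elements $y$ themselves, and so survives any such twisting.

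Finally I would transfer this presentation across the coproduct. A coproduct of Boolean algebras is presented by the disjoint union of presentations of its summands, so $\coprod_{x \in X} B_0$ is generated by symbols $(x,y)$ for $(x,y)\in X\times Y$, subject for each $x$ to $(x,y)\wedge (x,y') = 0$ when $y \neq y'$ and to $\unjoin_{y\in Y}(x,y) = 1$; in a Boolean algebra these are the same as the inequalities $(x,y)\wedge (x,y')\leq 0$ and $1\leq \unjoin_{y\in Y}(x,y)$ in the statement. Under the bijection of Proposition~\ref{prop:SPowerD} the generator $(x,y)$ is just $(x,\{y\})$, namely the clopen consisting of those $f\colon X\to Y$ with $f(x)=y$, as one expects. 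I expect the main obstacle to be the re-presentation of $\fin Y$ in the second paragraph: once that geometric fact is in hand, substituting the generators $Y$ for $B_0$ in the $X\times B_0$ presentation of Proposition~\ref{prop:SPowerD} and reading off the coproduct presentation is routine.
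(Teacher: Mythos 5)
Your proof is correct, but it takes a genuinely different route from the paper's. The paper works directly with the presented algebra $B$: a prime filter of $B$, i.e.\ a Boolean homomorphism $B\to 2$, is a \emph{decidable} subset of $X\times Y$ which the relations force to be a total, single-valued relation from $X$ to $Y$; conversely, every functional relation $\theta$ into the decidable-equality set $Y$ is automatically decidable, since $x(\neg\theta)y$ is the geometric formula $\Exists{y'}(y\neq y'\wedge x\theta y')$. So prime filters of $B$ are exactly the maps $X\to Y$, and the proposition follows in one step. You instead reduce to Proposition~\ref{prop:SPowerD} via $Y=\Spec(\fin Y)$ (Proposition~\ref{prop:DiscStone}), identify the clopen algebra as the Boolean-algebra copower $\coprod_{x\in X}\fin Y$, and then re-present $\fin Y$ on the generators $Y$ with the disjointness and covering relations; pasting presentations across the coproduct gives the stated presentation. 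Your route is more modular: the decidability issue is dealt with once, inside Proposition~\ref{prop:SPowerD} (where Boolean complements handle it), and the genuinely new content is isolated as a finitary, purely algebraic fact about $\fin Y$ --- in effect, Stone duality turning the power $Y^X$ into a copower of clopen algebras. The paper's argument buys brevity and makes the key point explicit: for finite decidable $Y$, ``decidable functional relation'' is no restriction at all. Two small caveats on your write-up: the identification $\fin Y=\power Y$ is classical only --- geometrically $\fin Y$ is the algebra of decidable subsets of $Y$, strictly smaller than $\power Y$ in general --- though this is harmless since your argument only ever uses $\fin Y$; and the presentation of $\Clop(Y^X)$ by generators $X\times\fin Y$ that you invoke comes from the \emph{proof} of Proposition~\ref{prop:SPowerD} rather than its statement, which is legitimate, since that is precisely what that proof establishes.
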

\begin{proof}
  A prime filter of $B$, in other words a BA-homomorphism from $B$ to 2, is a \emph{decidable} subset of $X\times Y$ that, in respecting the conditions, is a functional relation from $X$ to $Y$.
  But every functional relation $\theta$ here is decidable, since $x(\neg\theta)y$ is the geometric formula
  $\Exists {y'} (y\neq y' \wedge x\theta y')$.
  Hence functions $X\to Y$ are equivalent to prime filters of $B$. 
\end{proof}

\begin{proposition} \label{prop:DSPowerS}
	Let $Y$ be discrete Stone, and $X=\Spec A$ Stone.
	Then $Y^X$ is discrete.
	It is the set of $\theta_0 \in \fin(Y\times A)$ satisfying the following conditions, all geometric formulae.
	\begin{enumerate}
    \item
      $\Forall {y\in Y} \Exists {a\in A} y\theta_0 a$
    \item
      $\Forall {(y,a), (y',a') \in \theta_0} (y\neq y' \vee a=a')$
    \item
      $\unjoin_{y\theta_0 a} a = 1$
    \item
      $\Forall {(y,a), (y',a') \in \theta_0} (y = y' \vee a\wedge a'=0)$
	\end{enumerate} 
\end{proposition}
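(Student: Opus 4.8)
The plan is to combine the characterization of discreteness from Proposition~\ref{prop:DPowerS} with the finite, decidable structure of $Y$ supplied by Proposition~\ref{prop:DiscStone}. Since $Y$ is discrete and $X=\Spec A$ is Stone, Proposition~\ref{prop:DPowerS} already tells us that $Y^X$ is discrete; what remains is to pin down the explicit set of points. I would work frame-side, as elsewhere in this section. Here $\Omega Y=\power Y$ (with $Y$ finite) and $\Omega X=\Idl A$, so a point of $Y^X$, \ie\ a map $X\to Y$, is a frame homomorphism $h\colon\power Y\to\Idl A$. Because $Y$ is finite with decidable equality, $\power Y$ is the finite atomic Boolean algebra on the atoms $\{y\}$, so $h$ is determined by the ideals $I_y=h(\{y\})$, which are pairwise disjoint ($I_y\cap I_{y'}=\{0\}$ for $y\neq y'$, using decidability of equality to make the atoms genuinely disjoint) and cover ($\unjoin_{y} I_y=A$, since $\unjoin_y\{y\}=Y$ and $Y$ is finite).

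The key step is to show that each $I_y$ is principal. From $1\in\unjoin_y I_y$ and finiteness of $Y$, join-closure of ideals lets me pick $a_y\in I_y$ (one per $y$, taking $a_y=0$ where necessary) with $\unjoin_{y} a_y=1$; disjointness of the $I_y$ gives $a_y\wedge a_{y'}=0$ for $y\neq y'$, so $(a_y)_{y\in Y}$ is a partition of $1$ in $A$. For any $b\in I_y$ I then compute
\[
  b=b\wedge\unjoin_{y'} a_{y'}=\unjoin_{y'}(b\wedge a_{y'})=b\wedge a_y\leq a_y,
\]
the cross terms vanishing because $b\wedge a_{y'}\in I_y\cap I_{y'}=\{0\}$ for $y'\neq y$. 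Hence $I_y=\downarrow a_y$ is principal, and its generator $a_y$ is uniquely determined by $h$. Thus a point of $Y^X$ is precisely a $Y$-indexed partition of unity $(a_y)_{y\in Y}$ in $A$, and setting $\theta_0=\{(y,a_y)\mid y\in Y\}\in\fin(Y\times A)$ translates this data into exactly conditions (1)--(4): totality (1) and single-valuedness (2) say $\theta_0$ is the graph of a function $Y\to A$, (3) is the covering $\unjoin_{y\theta_0 a}a=1$, and (4) is disjointness.

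Conversely, any $\theta_0$ satisfying (1)--(4) is the graph of a partition of unity and so defines a frame homomorphism via $h(\{y\})=\downarrow a_y$; uniqueness of the principal generator shows distinct such $\theta_0$ give distinct maps, so -- in contrast to the general situation of Proposition~\ref{prop:DPowerS} -- \emph{no} quotient is needed. Finally, conditions (1)--(4) are geometric formulae in the variable $\theta_0\oftype\fin(Y\times A)$ (the covering (3) being a single equation in the constant Boolean algebra $A$, the others finite conjunctions), so they carve out a subset of the set $\fin(Y\times A)$; this exhibits $Y^X$ as discrete with the stated set of points.

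I expect the principality argument to be the place where the hypotheses genuinely bite: finiteness of $Y$ is what makes the covering join finite and lets the elements $a_y$ be chosen, while decidable equality is what makes the atoms $\{y\}$ pairwise disjoint, so that $I_y\cap I_{y'}=\{0\}$ and the cross terms in the displayed calculation vanish. Without both, the ideals $I_y$ need not be principal and one is thrown back on the quotient construction of Proposition~\ref{prop:DPowerS}. A secondary point to handle carefully is the claim that uniqueness of the generator removes the need for a quotient; this should follow immediately from the fact that a principal ideal in a distributive lattice has a unique top element.
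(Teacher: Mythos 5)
Your proof is correct, and it supplies genuine detail where the paper is deliberately terse. The paper's own proof is a single sentence: it re-uses the machinery of Proposition~\ref{prop:DPowerS}, where points of $Y^X$ were identified with finite relations $\theta_0\in\fin(Y\times A)$ \emph{modulo} the equivalence $\theta_0\equiv\phi_0$ iff $\theta_0\subseteq\overline{\phi}_0$, and observes that when $Y$ is also Stone each equivalence class has a canonical representative satisfying (1)--(4) (concretely: for each $y\in Y$, replace $\{a\mid y\theta_0 a\}$ by its finite join, inserting $(y,0)$ for any $y$ that does not occur). You reorganize this: instead of canonicalizing the finite data, you work with the frame homomorphisms $h\colon\power Y\to\Idl A$ directly, prove that each ideal $I_y=h(\{y\})$ is principal, and read off the partition of unity $(a_y)_{y\in Y}$ as the unique generators. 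That principality lemma is exactly what the paper's ``canonical form'' encodes implicitly, so the combinatorial core is shared -- grouping a finite covering family by $y$, with finiteness of $Y$ making the joins finite and decidable equality licensing the case splits. What your version buys is a self-contained argument that never forms the quotient of Proposition~\ref{prop:DPowerS} at all, plus an explicit reason (uniqueness of the generator of a principal ideal) why no quotient is needed here, which the paper leaves unsaid.

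Two points to tighten. First, the claim that $\power Y$ ``is the finite atomic Boolean algebra'' is not constructively accurate: $\power 1=\Omega$ is Boolean only under excluded middle, and arbitrary subsets of $Y$ need not be decidable even when equality is. Fortunately your argument never uses Booleanness -- only that every subset of $Y$ is the join of the singletons it contains, and that equality on $Y$ admits case splits -- so this is a matter of rephrasing, not repair. Second, the converse direction, that data satisfying (1)--(4) genuinely defines a frame homomorphism via $h(S)=\unjoin_{y\in S}\mathord{\downarrow}a_y$, requires the (routine) verification of binary meet preservation, and that verification is again precisely where decidable equality of $Y$ enters; it deserves a line rather than being absorbed into ``so defines a frame homomorphism''.
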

\begin{proof}
  With $Y$ Stone, the finite $\theta_0$s appearing in the proof of Proposition~\ref{prop:DPowerS} can be worked into a canonical form satisfying the above conditions.
\end{proof}

\begin{example} \label{ex:2Power}
  Let $Y=2$, clearly discrete Stone.
  
  If $X$ is discrete, then $2^X$ is Stone. Its points are the decidable subsets of $X$, and $\Clop(2^X)$ is the free Boolean algebra over $X$.
  
  Note that $2$ is itself a Boolean algebra, and it follows that $2^X$ is a Stone Boolean algebra. (Compare with the duality between sets and Stone Boolean algebras, classically proved in~\cite[VI.3.2]{StoneSp}.)
  
  Now suppose instead that $X=\Spec A$ is Stone.
  Then $2^X$ is discrete. Its points, the clopens of $X$, are the elements of $A$: $2^X=\Clop X$.
  
  Because local compactness is preserved, we can repeat the exponentiation, to find that if $X$ is discrete then $2^{2^X}$ is the Boolean algebra freely generated by $X$.
  
  It follows that $2^{2^{-}}$ is the functor part of a monad.
  Of course, this is already well known, as a \emph{continuation monad,} in cartesian closed settings,
  and has been studied in elementary toposes using the exponentiation there.
  But note that it gives a \emph{different answer.} This is clear even for classical sets, where the usual $2^{2^X}$ has a much greater cardinality in the case of infinite $X$.
  It would be interesting to understand whether the geometric $2^{2^X}$ can be applied in settings where the continuation monad is used.  
\end{example}

\begin{example} \label{ex:0Power}
  Let $Y = 0$, again clearly discrete Stone.
  
  If $X$ is discrete, then $0^X$ is Stone. Its Boolean algebra is freely generated by $\emptyset$, subject to relation $1\leq 0$ for each $x\in X$.
  This is a quotient of $2=\{0,1\}$, and we can calculate the corresponding congruence explicitly as
  $\{(0,0), (1,1)\}\cup \phi\times\{(0,1), (1,0)\}$
  where $\phi$ is the image of $X$ in 1.
  Thus if $0=1$ in the Boolean algebra then $\phi$ holds.
  
  $0^X$ is a subspace of 1, and in fact is the closed complement of the open subspace given by $\phi$. (See~\cite{ArithInd} for further discussion of how closed subspaces are Stone.)
  
  If on the other hand $X = \Spec A$ is Stone, then $0^X$ is discrete, $\{\emptyset\mid 0=1 \text{ in } A\}$.
  
  Putting these together, if $X$ is discrete then $0^{0^X}$ is the image of $X$ in 1,
  and $0^X$ is its closed complement.
    
  Shockingly, this means that if $X$ is already a subobject of 1 (a truthvalue), then $0^{0^X}=X$. In type theory, logical negation of $X$ is in effect defined as the exponential $0^X$, so what we see is that the double negation law of logic holds!
  Of course, type theory applies a hidden \glemph{discrete coreflection} to $0^X$, which explains the different answer.
  
  We can go further. For each locale, the subspaces form a lattice, in which each open has a Boolean complement -- the corresponding closed subspace.
  Thus we also have excluded middle $X \vee 0^X = 1$, once we understand that the join is taken of sub\emph{spaces}, not of sub\emph{sets} (subobjects) of 1.
\end{example}

\begin{example} \label{ex:SierpClosed}
  The key to understanding Example~\ref{ex:0Power} is that $0^X$ is not a sub\emph{set} of 1, which would be open, but a closed sub\emph{space}; and its topology is Stone, not discrete.
  
  This is hard to visualize from the perspective of point-set topology, but we can see it in more concrete form using Sierpinski space $\Sierp$, whose points are subsets of 1.
  Clearly it has two classical points $\bot=0$ and $\top=1$, and in fact every point is a directed join of these.
  Hence $\Sierp$ is the ideal completion of the 2-element poset $\twopos=\{\bot\sqsubseteq\top\}$.
  It follows by presheaf theory that $\baseS\Sierp$ is the arrow category $\baseS^{\rightarrow}$.
  The generic point $P$ is $(0\to 1)$, a subsheaf of $1=(1\to 1)$.
	
  If we calculate the discrete space for $P$, as a local homeomorphism over $\Sierp$, we find stalks $P(\bot)=0$ and $P(\top)=1$.
  Its closed complement $P'=0^P$ has $P'(\bot)=1$ and $P'(\top)=0$.
  This is very much not a local homeomorphism, as it does not have the opfibration property (Section~\ref{sec:OpfibFib})-- there cannot be a fibre map from $P'(\bot)$ to $P'(\top)$.
  As a Stone space $\Spec B$, $B$ is a sheaf of Boolean algebras with $B(\bot)=2$ and $B(\top)=1$.
  $B(\bot)$ has exactly one prime filter, $\{1\}$, but $B(\top)$ has none. A prime filter would have to both contain 1 and not contain 0, which is impossible since they are equal.
	
  Let us calculate the (non-geometric) set of points of $0^P$, its \glemph{discrete coreflection}.
  This is the Heyting negation $\neg P$, and can also be viewed as the interior of $0^P$.
  Thinking of bundles, it is an approximation of $0^P$ by a local homeomorphism, with map $\neg P \to 0^P$.
  Looking at fibres over $\top$, we find that the map $(\neg P)(\top)\to (0^P)(\top)$ forces $(\neg P)(\top)=0$; but then the opfibration property for local homeomorphisms forces $(\neg P)(\bot)=0$.
  It follows that $\neg P$ is empty, and has lost too much information to be a good approximation to $0^P$.
\end{example}

\section{Conclusions} \label{sec:Conc}
This article is not so much finished work as an invitation to further research. We have explained how, if one's mathematics is geometric, then it can be expressed in terms of point-free spaces (even generalized ones) and conducted in a pointwise manner.

Insofar as one's mathematics \emph{is} geometric, this is a big benefit, especially the prospect of dealing with bundles in a natural way as continuous space-valued maps.
An example of the style is~\cite{NgVic:PtfreeRE}, which defines real exponentiation and logarithms point-free.
However, the geometricity is a big proviso.

How much mathematics can be done geometrically, with everything continuous?

Not all of it, obviously. For instance, a large part of existing real analysis allows for the possibility of discontinuous functions on the reals, and set theory studies many sets that cannot be constructed geometrically.
Nonetheless, it's not impossible that a central core of continuous, geometric mathematics essentially covers the most important applications.
We might surmize that physics is intrinsically continuous -- few would believe that the Banach-Tarski construction has a reality in physics.
Also, Scott's work on denotational semantics postulates that computable functions are continuous with respect to a suitable non-discrete topology.
This was crucial in avoiding a cardinality blow-up in his semantics for the untyped $\lambda$-calculus.

The biggest open question for the geometric programme is algebraic topology.
A large part of Grothendieck's vision for toposes as generalized topological spaces is that they are structures \emph{for which one can calculate algebraic topology,}
for example sheaf cohomology.
If one takes an Abelian group $G$ internal in $\baseS X$ as group of coefficients, then one calculation is to take an injective coresolution of $G$ in $\baseS X$, apply the global sections functor $!_\ast$ to get a complex of groups in $\baseS$, and extract the cohomology groups from that.
This process is thoroughly non-geometric, and -- in my own limited understanding -- may depend on $\baseS$ being classical sets.
Nonetheless, one might hope for specific values of the cohomology groups to be robust enough to have a geometric characterization.

Many open questions centre on the transfer from geometric to arithmetic, the coherent (finitary) fragment of geometric logic, augmented with list objects (including nno's) to capture a significant part of the infinities in a base-free way.
Already in~\cite[6.1]{TopCat} it was remarked that, despite the overall geometric reasoning, there were still instances where the intuitionistic logic of elementary toposes was used to prove results stated geometrically: this would be a block to an arithmetic treatment.
Nonetheless, we may hope to find ways round this, such as the methods of~\cite{ArithInd}.

A big example of such a block lies in hyperspaces (powerlocales), which seem to be much more important in point-free topology than point-set.
(See, \eg, \cite{CViet}.)
\cite{PPExp} showed that the powerlocale constructions, normally applied to frames, can be reduced to geometric constructions on GRD-systems. This can be used to show that the constructions are geometric -- preserved by pullback along geometric morphisms.
This is related to the \emph{equivariant} property of the symmetric monad on toposes (\cite[6.4.4]{BungeFunk}; see also~\cite{ConnFT}).

\newcommand{\hyp}{\mathsf{P}}
This raises the question of whether a hyperspace construction $\hyp$ can be described purely in terms of GRD-systems, without reference to the frames they present. Certainly we can define maps $\hyp\colon\GRD\to\GRD$, and also define the corresponding unit and multiplication maps for $\hyp$ as a monad.
What is still open is how to capture the functoriality of $\hyp$ \emph{with respect to locale maps.}
They are automatically functorial with respect to GRD-homomorphisms, but that is different.
Treatments such as \cite{PPExp} are hybrid, making use of the frames in various places.

Finally, let us be clear that describing spaces as geometric/arithmetic theories can hardly be the final word in point-free topology.
For a start, it does not encompass any higher-dimensional (homotopy) features.
\cite{AnelJoyal:Topologie} does, using an explicit formal duality ``Topo-logie'', and in our current state of knowledge it is hard to see how else you would do it.

Another sticky point geometrically lies in the classifier $BG$ for a topological (= localic) group $G$.
(See \cite{Moer:ClassTCGI}.)
If $G$ is discrete, then $BG$ classifies the geometric theory of torsors over $G$, and they are equivalent to principal $G$-bundles.
More generally, the sheaves over $BG$ (the objects of the Grothendieck topos) are the continuous $G$-sets,
and the category of them does form a \glemph{Giraud frame}.
However, the proof of the set of generators (from which one might extract a geometric theory) does not appear to be geometric.
Hence we do not know of a convenient description of the points.

Moreover, the passage from $G$ to $BG$ can lose information.%
\footnote{
  -- in the case where $G$ is not \'etale complete.
}
For example, if $G$ is connected then continuous $G$-actions are all trivial, so $BG\simeq 1$.
(Suppose $X$ is a $G$-set. If $x\in X$ then the map $g\mapsto gx$ from the connected $G$ to the discrete $X$ must be constant, and so $gx = 1x = x$.) 
Topos theory does not provide a generalized space of principal $G$-bundles for such a $G$;
and in fact a simple argument of Lurie~\cite{Lurie:ClassITTLG} shows that this cannot be done even with the higher dimensional information of $\infty$-toposes.

Even if there are essential spaces (such as that of principal bundles over a connected group) that cannot be captured by the geometric/arithmetic theories,
one might still hope that there is a role for them in bootstrapping:
laying some minimal foundations for higher levels of construction.  

\bibliographystyle{amsalpha}

\providecommand{\bysame}{\leavevmode\hbox to3em{\hrulefill}\thinspace}
\providecommand{\MR}{\relax\ifhmode\unskip\space\fi MR }
\providecommand{\MRhref}[2]{%
	\href{http://www.ams.org/mathscinet-getitem?mr=#1}{#2}
}
\providecommand{\href}[2]{#2}

\end{document}